\def\NZQ{\Bbb}               
\def\ZZ{{\NZQ Z}}
\def\B'c{{\mathcal{B'}}}
\def\U'c{{\mathcal{U'}}}
\def\opn#1#2{\def#1{\operatorname{#2}}} 
\opn\chara{char}
\opn\length{\ell}
\opn\projdim{proj\,dim}
\opn\injdim{inj\,dim}
\opn\ini{in}
\opn\rank{rank}
\opn\depth{depth}
\opn\sdepth{sdepth}
\opn\height{ht}
\opn\embdim{emb\,dim}
\opn\codim{codim}
\opn\Tr{Tr}
\opn\bigrank{big\,rank}
\opn\superheight{superheight}\opn\lcm{lcm}
\opn\trdeg{tr\,deg}%
\opn\reg{reg}
\opn\lreg{lreg}
\opn\set{set}
\opn\supp{Supp}
\opn\shad{Shad}
\opn\div{div}
\opn\Div{Div}
\opn\cl{cl}
\opn\Cl{Cl}
\opn\Spec{Spec}
\opn\Supp{Supp}
\opn\supp{supp}
\opn\Sing{Sing}
\opn\Ass{Ass}
\opn\Min{Min}
\opn\Ann{Ann}
\opn\Rad{Rad}
\opn\Soc{Soc}
\opn\Ker{Ker}
\opn\Coker{Coker}
\opn\Im{Im}
\opn\Hom{Hom}
\opn\Tor{Tor}
\opn\Ext{Ext}
\opn\End{End}
\opn\Aut{Aut}
\opn\id{id}
\opn\nat{nat}
\opn\GL{GL}
\opn\SL{SL}
\opn\mod{mod}
\opn\ord{ord}
\opn\aff{aff}
\opn\con{conv}
\opn\relint{relint}
\opn\st{st}
\opn\lk{lk}
\opn\cn{cn}
\opn\core{core}
\opn\vol{vol}
\opn\gr{gr}
\def\pot#1#2{#1[\kern-0.28ex[#2]\kern-0.28ex]}
\opn\dirlim{\underrightarrow{\lim}}
\opn\invlim{\underleftarrow{\lim}}
\def\pnt{{\raise0.5mm\hbox{\large\bf.}}}
\def\Implies{\ifmmode\Longrightarrow \else
     \unskip${}\Longrightarrow{}$\ignorespaces\fi}
\def\implies{\ifmmode\Rightarrow \else
     \unskip${}\Rightarrow{}$\ignorespaces\fi}
\def\iff{\ifmmode\Longleftrightarrow \else
     \unskip${}\Longleftrightarrow{}$\ignorespaces\fi}
\newtheorem{Theorem}{Theorem}[section]
\newtheorem{Lemma}[Theorem]{Lemma}
\newtheorem{Corollary}[Theorem]{Corollary}
\newtheorem{Proposition}[Theorem]{Proposition}
\newtheorem{Remark}[Theorem]{Remark}
\newtheorem{Example}[Theorem]{Example}
\let\epsilon=\varepsilon
\let\phi=\varphi
\let\kappa=\varkappa
\numberwithin{equation}{section}
\title{Values and bounds of the Stanley depth}
\author{Muhammad Ishaq}
\thanks{The author would like to express his gratitude to ASSMS of GC University Lahore and the Institute of Mathematics of Romanian Academy for creating a very appropriate atmosphere for research work. This research is partially supported by HEC Pakistan}
\address{Muhammad Ishaq, Abdus Salam School of Mathematical Sciences, GC University, Lahore, 68-B New Muslim Town Lahore, Pakistan.}
\email{ishaq\_\,maths@yahoo.com}
\begin{document}

\maketitle

\begin{abstract}
We give different bounds for the Stanley depth of a monomial ideal $I$ of a polynomial algebra $S$ over a field $K$. For example we show that the Stanley depth of $I$ is less or equal with the Stanley depth of any prime ideal associated to $S/I$. Also we show that the Stanley conjecture holds for $I$ and $S/I$ when the associated prime ideals of $S/I$ are generated by disjoint sets of variables.\\\\
Keywords: Monomial ideals, Stanley decompositions, Stanley depth.\\
2000 Mathematics Classification: Primary 13H10, Secondary 13P10, 13C14, 13F20.
\end{abstract}

\section*{Introduction}
Let $K$ be a field, $S = K[x_1,\dots,x_n]$ be the polynomial ring in n variables with coefficients in $K$ and $M$ be a finitely generated $\ZZ^n$-graded $S$-module. Let $u\in M$ be a homogeneous element in $M$ and $Z$ a subset of the set of variables $Z\subset\{x_1,\dots,x_n\}$. We denote by $uK[Z]$ the $K$-subspace of $M$ generated by all elements $uv$ where $v$ is a monomial in $K[Z]$. If $uK[Z]$ is a free $K[Z]$-module, the $\ZZ^n$-graded $K$-space $uK[Z]\subset M$ is called a Stanley space of dimension $|Z|$.
A Stanley decomposition of M is a presentation of the $\ZZ^n$-graded $K$-vector space $M$ as a finite direct sum of Stanley spaces $$\mathcal{D}:M=\bigoplus\limits_{i=1}^{s}u_iK[Z_i].$$
The number
$$\sdepth\mathcal{D}=\min\{|Z_i| : i =1,\dots,s\}$$ is called the Stanley depth of decomposition $\mathcal{D}$ and the number
$$\sdepth M := \max\{\sdepth\mathcal{D} : \mathcal{D} \hbox{ \ is a Stanley decomposition of $M$}\}$$
is called the Stanley depth of M. This is a combinatorial invariant and does not depend on the characteristic of $K$.
The following widely open conjecture is due to Stanley \cite{RP}:
$$\depth M\leq \sdepth M \hbox{ \ for all $\ZZ^n$-graded $S$-modules $M$}.$$
\indent Let $P$ be an associated prime ideal of $S/I$. It is well known that $\depth_SS/I\leq \depth_S S/P=\dim S/ P$ and  so $\depth_SI\leq \depth_S P$. By Apel \cite{JP}
we have also $\sdepth_S S/I\leq \dim S/P$. Moreover our Theorem 1.1 says that it holds also  $\sdepth_SI\\\leq \sdepth_S P$. Let $G(I)$  be the minimal monomial generators of $I$ and $r=|G(I)|$. If there exists an associated prime ideal $P$ of $S/I$ such that $\height P=r$ then $\sdepth_SI=n-\lfloor\frac{r}{2}\rfloor$ as says our Corollaries \ref{t}, \ref{t1}. Let $\Ass S/I=\{P_1,\ldots, P_s\}$. If $P_i\not\subset \sum\limits_{j\neq i}^sP_j$ for all $1\leq i\leq s$ and $I$ is squarefree then $\sdepth_S(I)\geq \depth_S(I)$ by \cite[Theorem 2.3]{D3}. Suppose that $G(P_i)\cap G(P_j)=\emptyset$ for all $i\neq j$. In particular the above result holds in this frame as it was stated by \cite[Theorem 1.4]{AD}. Our Corollary \ref{1bb} shows that the last result holds even when $I$ is not squarefree. Moreover, we have $\sdepth_S(S/I)\geq \depth_S(S/I)$ by our Theorem \ref{1c}. Hence The Stanley conjecture holds in this frame for $I$ and $S/I$. \\ \indent It is hard to compute Stanley depth even using the nice method from \cite{HVZ} and so it is really important to give at least tight bounds (see for example \cite{CI}). If $s=3$, $r=\height P_1\leq e=\height P_2\leq q=\height P_3$ then an upper bound  for $\sdepth_SS/I$ is given by $e+\lceil\frac{q}{2}\rceil$ even $r+\lfloor\frac{q}{2}\rfloor$ except possible in the case when $r$ is even and $e=r+1$ (see Corollary \ref{as} and Proposition \ref{p}). On the other hand, $\sdepth_SS/I\geq \min\{r+e,r+\lfloor\frac{q}{2}\rfloor,  \lfloor\frac{e}{2}\rfloor+\lfloor\frac{q}{2}\rfloor\}$ as says our Lemma \ref{bs}. Section 3 is devoted to find good upper bounds for the $\sdepth_S I$ when  $s=3$ but $(G(P_i))_i$ are not necessarily disjoint. These bounds are not pleasant (see Proposition \ref{12}, Theorem \ref{2}, Proposition \ref{5}) but very tight in certain cases (see our examples \ref{315},\ref{310}, \ref{317}, \ref{312}).
Sometimes we are able to give some values of Stanley depth as in Corollaries \ref{t}, \ref{t1}, \ref{value}.
\section{upper bounds of the stanley depth of monomial ideals}
One of our main result is the following:
\begin{Theorem}\label{f}
Let $I\subset S$ be a monomial ideal such that $Ass(S/I)=\{P_1,\dots,P_s\}$. Then $$\sdepth(I)\leq \min\{\sdepth (P_i), \ 1\leq i \leq s\}.$$
\end{Theorem}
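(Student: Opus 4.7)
The plan is to reduce the theorem to the following lemma: \emph{for every monomial $v\in S$, $\sdepth_S(I:v)\geq \sdepth_S(I)$.} Granting this, the theorem is immediate: for each $P_i\in\Ass(S/I)$ one can choose a monomial $v\in S\setminus I$ with $P_i=(I:v)$, whence $\sdepth_S(P_i)=\sdepth_S(I:v)\geq\sdepth_S(I)$.

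To prove the lemma, start from a Stanley decomposition $\mathcal{D}\colon I=\bigoplus_{j=1}^s u_j K[Z_j]$ realizing $\sdepth_S(I)$, and build a Stanley decomposition of $(I:v)$ of the same depth by the following recipe. For each $j$, set $u_j'=u_j/\gcd(u_j,v)$ and $v_j'=v/\gcd(u_j,v)$, and define
\[
T_j \;=\; \begin{cases} u_j'\,K[Z_j] & \text{if } \supp(v_j')\subseteq Z_j,\\ 0 & \text{otherwise.}\end{cases}
\]
Each nonzero $T_j$ is a Stanley space of dimension $|Z_j|$, so as soon as one shows $(I:v)=\bigoplus_j T_j$, the resulting decomposition has depth at least $\min_j|Z_j|=\sdepth(\mathcal{D})$, which proves the lemma.

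The actual verification is the main technical step. A monomial $m$ lies in $(I:v)$ iff $vm$ lies in a (unique) summand $u_j K[Z_j]$; writing $vm=u_j w$ with $w\in K[Z_j]$ a monomial and comparing exponents variable by variable, the divisibility $u_j\mid vm$ translates to $u_j'\mid m$, while the support condition $\supp(vm/u_j)\subseteq Z_j$ forces simultaneously $\supp(v_j')\subseteq Z_j$ and $m/u_j'\in K[Z_j]$. Conversely, any $m\in u_j'K[Z_j]$ with $\supp(v_j')\subseteq Z_j$ satisfies $vm\in u_j K[Z_j]$. Hence $m\in T_j$ iff $vm\in u_j K[Z_j]$, so the $T_j$ partition the monomials of $(I:v)$, with disjointness inherited from the directness of $\mathcal{D}$. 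The principal obstacle is precisely this exponent-by-exponent bookkeeping: for each variable $x$ one must track whether $\deg_x(v)$ exceeds $\deg_x(u_j)$ and whether $x\in Z_j$, in order to confirm that the simple recipe $T_j=u_j'K[Z_j]$ captures exactly the monomials $m$ with $vm\in u_j K[Z_j]$.
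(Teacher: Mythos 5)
Your proof is correct and takes essentially the same route as the paper: the paper likewise picks, for each $P_i\in\Ass(S/I)$, a monomial $w_i\notin I$ with $(I:w_i)=P_i$ and then applies the inequality $\sdepth_S(I)\leq\sdepth_S(I:w_i)$, which it simply cites as \cite[Proposition 1.3]{D2} rather than proving. Your explicit construction of the Stanley decomposition of $(I:v)$ from one of $I$ (via $u_j'=u_j/\gcd(u_j,v)$ and the support condition on $v/\gcd(u_j,v)$) is a correct proof of that cited lemma, so the only difference is that you supply the argument the paper outsources.
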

\begin{proof}
Let $P_i\in \Ass(S/I)$ then $P_i$ is monomial and there exists a monomial $w_i\notin I$ such that $I:w_i=P_i$. By \cite[Proposition 1.3]{D2} (see arXiv version) we have $\sdepth(I)\leq \sdepth(I:w_i)=\sdepth(P_i)$. Thus we get $$\sdepth(I)\leq \min\{\sdepth (P_i), \ 1\leq i \leq s\}.$$
\end{proof}
\begin{Corollary}
Let $I\subset S$ be a monomial ideal such that $\mathfrak{m}\in \Ass(S/I)$, then $\sdepth(I)\leq \lceil\frac{n}{2}\rceil$.
\end{Corollary}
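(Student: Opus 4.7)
The plan is to reduce the corollary to Theorem~1.1 together with the known value of the Stanley depth of the graded maximal ideal.

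First, since $\mm=(x_1,\dots,x_n)$ is assumed to be an associated prime of $S/I$, Theorem~\ref{f} applied with $P_i=\mm$ immediately gives the bound
\[
\sdepth_S(I)\ \leq\ \sdepth_S(\mm).
\]
So the entire problem is reduced to showing $\sdepth_S(\mm)\leq\lceil n/2\rceil$.

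Second, I would simply invoke the computation of the Stanley depth of the maximal irrelevant ideal: Bir\'o--Howard--Keller--Trotter--Young (and independently through the Herzog--Vladoiu--Zheng framework) established $\sdepth_S(\mm)=\lceil n/2\rceil$. Combining this equality with the inequality above yields the claimed bound $\sdepth_S(I)\leq\lceil n/2\rceil$.

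There is essentially no obstacle here beyond pointing to the correct reference; the only subtlety worth flagging is that one only needs the upper bound $\sdepth_S(\mm)\leq\lceil n/2\rceil$, not the full equality. This upper bound can itself be seen combinatorially from the poset of nonempty subsets of $\{x_1,\dots,x_n\}$ via the Herzog--Vladoiu--Zheng correspondence, so the argument is entirely self-contained modulo that single well-known ingredient. Hence the proof amounts to two lines: apply Theorem~\ref{f} with $P_i=\mm$, then insert the value $\sdepth_S(\mm)=\lceil n/2\rceil$.
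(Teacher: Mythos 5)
Your argument is exactly the one the paper intends (and leaves implicit, giving no written proof): apply Theorem~\ref{f} with $P_i=\mathfrak{m}$ and insert the known value $\sdepth_S(\mathfrak{m})=\lceil\frac{n}{2}\rceil$ of Bir\'o--Howard--Keller--Trotter--Young. The proposal is correct and matches the paper's approach.
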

\begin{Corollary}\label{t}
Let $I\subset S$ be a monomial ideal with $|G(I)|=m$ suppose that $m$ is even, and let there exists a prime ideal $P\in \Ass(S/I)$ such that $\height(P)=m$. Then $$\sdepth_S(I)=n-\frac{m}{2}.$$
\end{Corollary}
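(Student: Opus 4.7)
The strategy is to prove the equality by establishing matching upper and lower bounds on $\sdepth_S(I)$.

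For the upper bound, I would apply Theorem~\ref{f}: since $P \in \Ass(S/I)$, we have $\sdepth_S(I) \le \sdepth_S(P)$. After relabeling, $P = (x_1, \ldots, x_m)S$, and the Stanley depth of such a prime is well known to be $n - \lfloor m/2 \rfloor$, being the value $\lceil m/2 \rceil$ of Bir\'o--Howard--Keller--Trotter--Young for the maximal ideal of $K[x_1, \ldots, x_m]$ plus the $n - m$ free variables $x_{m+1}, \ldots, x_n$. For $m$ even this evaluates to $n - m/2$, yielding the upper bound.

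For the lower bound $\sdepth_S(I) \ge n - m/2$, I would first extract structural information about the generators. Pick a monomial $w \notin I$ with $I : w = P = (x_1, \ldots, x_m)$. Then $x_j w \in I$ for each $j$, so some generator $u_{\sigma(j)} \in G(I)$ divides $x_j w$; since $w \notin I$, this forces $u_{\sigma(j)} = x_j v_j$ with $v_j \mid w$. A short exponent-vector check shows $\sigma$ is injective: if two distinct indices $j, k$ both mapped to some $u$, then $u \mid x_j w$ and $u \mid x_k w$ would combine to give $u \mid w$, contradicting $w \notin I$. Hence $\sigma$ is a bijection $\{1,\ldots,m\} \to G(I)$, and after relabeling, $G(I) = \{x_1 v_1, \ldots, x_m v_m\}$ with $v_j \mid w$ for all $j$.

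With $G(I)$ in this special form, the plan is to construct a Stanley decomposition of $I$ with every piece of dimension at least $n - m/2$, by grouping the generators into $m/2$ pairs along the partition $\{(x_{2k-1}, x_{2k})\}_{k=1}^{m/2}$ of the variables of $P$ and mimicking the block decomposition of $P$ itself. The hard part will be managing the ``extra'' factors $v_j$, which may involve variables from either inside or outside $P$ and thereby create overlaps between the contributions of different pairs. I expect to resolve this by induction on $m$: peel off the last pair $(x_{m-1}, x_m)$, partition $I$ according to divisibility by $x_{m-1}$ and $x_m$, and reduce the residual parts to the same problem in a smaller polynomial ring for the ideal generated by $u_1, \ldots, u_{m-2}$. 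The base case $m = 2$ is a direct calculation for a two-generator ideal.
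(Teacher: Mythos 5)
Your upper bound is exactly the paper's: Theorem~\ref{f} gives $\sdepth_S(I)\le\sdepth_S(P)$, and $\sdepth_S(P)=\lceil m/2\rceil+(n-m)=n-m/2$ for $m$ even. That half is complete and correct.

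The lower bound, however, is where the real content lies, and your proposal does not prove it. The paper disposes of it in one line by citing Okazaki's theorem (\cite[Theorem 2.3]{O}): for \emph{any} monomial ideal $I$ with $|G(I)|=m$ one has $\sdepth_S(I)\ge n-\lfloor m/2\rfloor$; no structural information about $G(I)$ or about $P$ is needed. You instead try to build the bound from scratch. Your structural observation is correct and nicely argued --- choosing $w\notin I$ with $I:w=P$ does force, after relabeling, $G(I)=\{x_1v_1,\dots,x_mv_m\}$ with $v_j\mid w$, and your injectivity argument for $\sigma$ is sound. But from that point on you only state a plan: you say the ``hard part'' is managing the factors $v_j$ and that you ``expect'' to resolve it by induction on $m$ by peeling off the pair $(x_{m-1},x_m)$. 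That step is precisely where the difficulty of the theorem sits. Partitioning $I$ by divisibility by $x_{m-1}$ and $x_m$ does not cleanly reduce to the ideal $(u_1,\dots,u_{m-2})$ in a smaller ring: the pieces not divisible by $x_{m-1}$ or $x_m$ still live in all $n$ variables, the $v_j$ may involve $x_{m-1},x_m$ and variables outside $P$, and one must show every Stanley space produced has dimension at least $n-m/2$, not $n-1-(m-2)/2$. Carrying this out rigorously would amount to reproving Okazaki's bound (or the related arguments of Shen \cite{Y} for the complete intersection case), which is a genuine theorem, not a routine verification. So the proposal has a real gap: the lower bound is asserted via an unexecuted construction rather than proved. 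The fix is either to invoke the known general bound $\sdepth_S(I)\ge n-\lfloor|G(I)|/2\rfloor$, as the paper does, or to actually supply the inductive partition argument you sketch.
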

\begin{proof}
By \cite[Theorem 2.3]{O} we have $\sdepth(I)\geq n-\frac{m}{2}$. Since there exists a prime ideal $P\in \Ass (S/I)$, with $\height(P)=m$, thus by Theorem \ref{f} we have that $\sdepth(I)\leq n-\frac{m}{2}$.
\end{proof}
\begin{Corollary}\label{t1}
Let $I\subset S$ be a monomial ideal with $|G(I)|=m$ suppose that $m$ is odd, and let there exists a prime ideal $P\in \Ass(S/I)$ such that $\height(P)\geq m-1$. Then $$\sdepth_S(I)=n-\lfloor\frac{m}{2}\rfloor.$$
\end{Corollary}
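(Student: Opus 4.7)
The plan is to prove matching upper and lower bounds for $\sdepth_S(I)$, mirroring the strategy of Corollary~\ref{t}. For the lower bound I would invoke Okazaki's inequality \cite[Theorem 2.3]{O} directly, obtaining $\sdepth_S(I)\geq n-\lfloor m/2\rfloor$ from the single hypothesis $|G(I)|=m$.

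For the upper bound the strategy is to apply Theorem~\ref{f} to the distinguished associated prime $P$ and then to compute $\sdepth_S(P)$ explicitly. For any monomial prime $Q=(x_{i_1},\dots,x_{i_h})\subset S$ one has
\[
\sdepth_S(Q)=n-\lfloor h/2\rfloor,
\]
a formula that follows from the theorem of Biró--Howard--Keller--Trotter--Young on the Stanley depth of the graded maximal ideal, combined with the standard additivity of Stanley depth under extension by a polynomial ring in the remaining $n-h$ variables. Since $h\mapsto n-\lfloor h/2\rfloor$ is weakly decreasing and $m$ is odd, the hypothesis $\height(P)\geq m-1$ yields
\[
\sdepth_S(P)\leq n-\lfloor(m-1)/2\rfloor=n-(m-1)/2=n-\lfloor m/2\rfloor.
\]

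Combining these, Theorem~\ref{f} gives $\sdepth_S(I)\leq\sdepth_S(P)\leq n-\lfloor m/2\rfloor$, which together with the Okazaki lower bound forces equality. The only care point, and the reason the hypothesis relaxes from $\height(P)=m$ to $\height(P)\geq m-1$ compared with Corollary~\ref{t}, is the parity bookkeeping: for odd $m$ both $\lfloor(m-1)/2\rfloor$ and $\lfloor m/2\rfloor$ equal $(m-1)/2$, so the two subcases $\height(P)=m-1$ (even) and $\height(P)=m$ (odd) produce the same value on the right. I do not expect any significant obstacle beyond this elementary verification; the argument is essentially a parity-refined version of the one already given for Corollary~\ref{t}.
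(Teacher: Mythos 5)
Your proposal is correct and follows exactly the route the paper intends: the paper states Corollary~\ref{t1} without a separate proof precisely because it is the parity-adjusted analogue of Corollary~\ref{t}, i.e.\ Okazaki's lower bound $\sdepth_S(I)\geq n-\lfloor m/2\rfloor$ combined with Theorem~\ref{f} and the formula $\sdepth_S(P)=n-\lfloor\height(P)/2\rfloor$ for a monomial prime. Your observation that $\lfloor(m-1)/2\rfloor=\lfloor m/2\rfloor$ for odd $m$ is exactly the point that lets the hypothesis relax to $\height(P)\geq m-1$.
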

\begin{Corollary}\label{t2}
Let $I\subset S$ be a monomial ideal and let $P_i\in \Ass(S/I)=\{P_1,\dots,P_s\}$, $d_i=\height(P_i)$, let $I':=(I,x_{n+1},x_{n+2})\subset S':=S[x_{n+1},x_{n+2}]$, Then $$\sdepth_{S'}(I')\leq \min\{n+1-\lfloor\frac{d_i}{2}\rfloor, \ 1\leq i \leq s\}.$$
\end{Corollary}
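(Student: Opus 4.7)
The strategy is to reduce the claim to Theorem~\ref{f} applied to $I'$ in $S'$. For this I need to (a) identify suitable associated primes of $S'/I'$ whose Stanley depths I can control, and (b) compute the Stanley depth of those primes exactly.

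First I would observe that passing from $I$ to $I'=(I,x_{n+1},x_{n+2})$ essentially enlarges each associated prime of $S/I$ by the two new variables. More precisely, for each $P_i\in \Ass_S(S/I)$ choose a monomial $w_i\in S$ with $I:_S w_i=P_i$; the same $w_i$, viewed in $S'$, satisfies $I':_{S'} w_i=P_i+(x_{n+1},x_{n+2})$, since the variables $x_{n+1},x_{n+2}$ already annihilate $w_i$ modulo $I'$ and any monomial $f\in S$ with $fw_i\in I'$ must actually have $fw_i\in I$. Hence $P_i':=(P_i,x_{n+1},x_{n+2})\in \Ass_{S'}(S'/I')$ and $\height(P_i')=d_i+2$.

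Next, Theorem~\ref{f} applied to the ideal $I'\subset S'$ gives
\[
\sdepth_{S'}(I')\leq \min\{\sdepth_{S'}(P_i'),\ 1\leq i\leq s\}.
\]
So it remains to evaluate $\sdepth_{S'}(P_i')$. Each $P_i'$ is generated by $d_i+2$ of the $n+2$ variables of $S'$, so tensoring a Stanley decomposition of the maximal-ideal case with a polynomial ring in the remaining $(n+2)-(d_i+2)$ variables reduces the computation to the known formula of Bir\'o--Howard--Keller--Trotter--Young for the Stanley depth of the maximal ideal: for a monomial prime $P$ of height $h$ in a polynomial ring in $N$ variables one has $\sdepth(P)=N-\lfloor h/2\rfloor$. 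Applying this with $N=n+2$ and $h=d_i+2$ yields
\[
\sdepth_{S'}(P_i')=(n+2)-\lfloor (d_i+2)/2\rfloor = n+1-\lfloor d_i/2\rfloor.
\]
Combining with the previous inequality and taking the minimum over $i$ gives the desired bound.

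The only delicate step is verifying that $I':_{S'} w_i = P_i+(x_{n+1},x_{n+2})$, and hence that the $P_i'$ really are associated primes of $S'/I'$; the rest is a direct application of Theorem~\ref{f} together with the standard value of the Stanley depth of a monomial prime. No significant obstacle is expected beyond this routine bookkeeping.
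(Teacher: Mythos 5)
Your proposal is correct and follows essentially the same route as the paper: identify $(P_i,x_{n+1},x_{n+2})$ as the associated primes of $S'/I'$, apply Theorem~\ref{f}, and then use the known value $\sdepth(P)=N-\lfloor h/2\rfloor$ for a monomial prime of height $h$ in $N$ variables (the paper cites \cite[Theorem 1.3]{C1} for this last step). Your verification that $I':_{S'}w_i=P_i+(x_{n+1},x_{n+2})$ and the arithmetic $(n+2)-\lfloor(d_i+2)/2\rfloor=n+1-\lfloor d_i/2\rfloor$ are both sound.
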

\begin{proof}
We have $\Ass_{S'}(S'/I')=\{(P_1,x_{n+1},x_{n+2}),\dots,(P_s,x_{n+1},x_{n+2})\}$. Using Theorem \ref{f} we get that
$\sdepth_{S'}(I')\leq \min\{\sdepth (P_i,x_{n+1},x_{n+2}), \ 1\leq i \leq s\}$ and it enough to apply \cite[Theorem 1.3]{C1}.
\end{proof}

\begin{Remark}\label{r}
{\em By \cite[Lemma 2.11]{MI} we have $\sdepth_{S'}(I')\leq \sdepth_S(I)+2$ but our above corollary says that the bound of $\sdepth_{S'}(I')$ given by Theorem \ref{f} is $1+$ the bound of $\sdepth_S(I)$.
}
\end{Remark}
Next we need the following:
\begin{Lemma}\label{sdep1}\cite[Lemma 1.2]{AD} Let $S=K[x_1,\ldots,x_n]$ and $I \subset K[x_1,\ldots,x_r]=S'$, $J
\subset K[x_{r+1},\ldots,x_n]=S''$, where $1<r<n$ be monomial ideals. Then
$$\sdepth_S(IS \cap JS) \ge \sdepth_{S'}(I)+\sdepth_{S''}(J).$$
\end{Lemma}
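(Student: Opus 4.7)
The plan is to exploit the product structure coming from the splitting of the variables. Writing $S=S'\otimes_K S''$, we have $IS=I\otimes_K S''$ and $JS=S'\otimes_K J$, and one checks on monomials that
$$IS\cap JS \;=\; I\otimes_K J.$$
Indeed, a monomial $u\in S$ factors uniquely as $u=u'u''$ with $u'\in S'$ and $u''\in S''$, and since $I$ and $J$ are monomial ideals in disjoint sets of variables, $u\in IS$ iff $u'\in I$, and $u\in JS$ iff $u''\in J$. Hence the monomials of $IS\cap JS$ are exactly the products $u'u''$ with $u'\in I$ and $u''\in J$, and the $\ZZ^n$-graded $K$-vector space they span is $I\otimes_K J$.

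Next I would take optimal Stanley decompositions
$$I=\bigoplus_{i=1}^{p} u_i\,K[Z_i],\qquad J=\bigoplus_{j=1}^{q} v_j\,K[W_j],$$
with $Z_i\subset\{x_1,\dots,x_r\}$, $W_j\subset\{x_{r+1},\dots,x_n\}$, $\min_i|Z_i|=\sdepth_{S'}(I)$, and $\min_j|W_j|=\sdepth_{S''}(J)$. Tensoring over $K$ and using the identification above gives
$$IS\cap JS \;=\; \bigoplus_{i,j}\; u_i v_j\,K[Z_i\cup W_j].$$
Because $Z_i$ and $W_j$ live in disjoint variable sets, the $K$-algebra $K[Z_i\cup W_j]$ is precisely $K[Z_i]\otimes_K K[W_j]$, and the freeness of $u_iK[Z_i]$ over $K[Z_i]$ together with that of $v_jK[W_j]$ over $K[W_j]$ forces $u_iv_j\,K[Z_i\cup W_j]$ to be a free $K[Z_i\cup W_j]$-module of rank one, i.e.\ a genuine Stanley space. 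Thus the displayed decomposition is a Stanley decomposition of $IS\cap JS$ as an $S$-module.

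Finally, each summand has dimension $|Z_i\cup W_j|=|Z_i|+|W_j|$ (disjointness is crucial here), and this is at least $\sdepth_{S'}(I)+\sdepth_{S''}(J)$ by the choice of the two input decompositions. Taking the minimum over $(i,j)$ yields the claimed inequality $\sdepth_S(IS\cap JS)\ge \sdepth_{S'}(I)+\sdepth_{S''}(J)$.

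The only real point requiring care is the identification $IS\cap JS=I\otimes_K J$ and the verification that the combined pieces $u_iv_jK[Z_i\cup W_j]$ remain honest Stanley spaces whose $K$-span reassembles the tensor product without overlap; both reduce, via the $\ZZ^n$-grading, to a monomial-by-monomial bookkeeping argument and cause no genuine difficulty. The argument does not use any special structure of $I$ or $J$ beyond being monomial, and works verbatim for Stanley decompositions of the quotients too, but that is not needed here.
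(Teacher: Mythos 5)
Your proof is correct: the identification $IS\cap JS=I\otimes_K J$ via the unique factorization of monomials into their $S'$- and $S''$-parts, followed by tensoring two optimal Stanley decompositions to obtain Stanley spaces $u_iv_jK[Z_i\cup W_j]$ of dimension $|Z_i|+|W_j|$, is exactly the standard argument for this lemma. The paper itself offers no proof, simply citing \cite[Lemma 1.2]{AD}, and your tensor-product construction is the same juxtaposition argument used in that reference, so there is nothing to object to.
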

\begin{Corollary}\label{1a}
Let $I_i\subset S_i=K[Z_i]$, $i=1,\dots,r$ be monomial ideals where $Z_i\subset \{x_1,\dots,x_n\}$ and $Z_i\cap Z_j=\emptyset$ for all $i\neq j$ and $m_i=|G(I_i)|$. Let $S=K[Z_1\cup Z_2\cup\dots\cup Z_r]$ and  $\sum\limits_{i=1}^r|Z_i|=n$. Then $$\sdepth_S(I_1S\cap I_2S\cap\dots\cap I_rS)\geq \sdepth_{S_1}(I_1)+\dots+\sdepth_{S_r}(I_r)\geq n-\sum\limits_{i=1}^r\lfloor\frac{m_i}{2}\rfloor.$$
\end{Corollary}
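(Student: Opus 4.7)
The plan is to decompose the statement into two inequalities and handle each separately. For the first inequality $\sdepth_S(I_1S\cap\dots\cap I_rS)\geq \sum_{i=1}^r\sdepth_{S_i}(I_i)$, I would argue by induction on $r$, using Lemma \ref{sdep1} as the base case $r=2$. For the inductive step, set $S''=K[Z_2\cup\dots\cup Z_r]$ and $J=I_2S''\cap\dots\cap I_rS''$, which is well defined because the $Z_i$ are pairwise disjoint so each $I_iS''$ is a monomial ideal in the subring generated by disjoint variable sets within $S''$. Then $I_1S\cap\dots\cap I_rS = I_1S \cap JS$, and since $I_1\subset K[Z_1]$ and $J\subset K[Z_2\cup\dots\cup Z_r]$ with $Z_1$ and $Z_2\cup\dots\cup Z_r$ disjoint and together spanning the variables of $S$, Lemma \ref{sdep1} yields
\[
\sdepth_S(I_1S\cap JS)\geq \sdepth_{S_1}(I_1)+\sdepth_{S''}(J).
\]
The induction hypothesis applied to $J$ in $S''$ gives $\sdepth_{S''}(J)\geq \sum_{i=2}^r\sdepth_{S_i}(I_i)$, and combining yields the desired bound.

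For the second inequality $\sum_{i=1}^r\sdepth_{S_i}(I_i)\geq n-\sum_{i=1}^r\lfloor m_i/2\rfloor$, the key is to apply \cite[Theorem 2.3]{O} (already cited in the proof of Corollary \ref{t}) separately to each $I_i\subset S_i$, obtaining $\sdepth_{S_i}(I_i)\geq |Z_i|-\lfloor m_i/2\rfloor$. Summing over $i$ and using the hypothesis $\sum_{i=1}^r|Z_i|=n$ produces the stated bound.

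The only potentially delicate point is the inductive step, specifically verifying that $J$ can legitimately be regarded as an ideal in $S''$ and that the hypotheses of Lemma \ref{sdep1} are met at each stage; this is straightforward because the disjointness of the $Z_i$ is preserved when one groups $Z_2,\dots,Z_r$ together, and each $I_iS''$ equals $I_iK[Z_i]$ extended to $S''$, so the intersection $J$ remains a monomial ideal in $S''$. No serious obstacle is expected, since both ingredients are invoked essentially off the shelf.
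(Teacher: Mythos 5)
Your proposal is correct and matches the argument the paper intends: the corollary is stated without proof precisely because it follows by iterating Lemma \ref{sdep1} over the pairwise disjoint variable sets and then applying Okazaki's lower bound $\sdepth_{S_i}(I_i)\geq |Z_i|-\lfloor m_i/2\rfloor$ to each factor, exactly as you do. Your attention to the inductive step (that $J$ lives in $S''$ and that intersections commute with the free extension to $S$) is the only detail worth recording, and you handle it correctly.
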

\begin{Remark}
{\em
With the hypothesis from Corollary \ref{1a}, if the Stanley conjecture holds for all $I_i\subset S_i$, then the Stanley conjecture also holds for $I_1S\cap I_2S\cap\dots \cap I_rS$ similarly as in \cite[Theorem 1.4]{AD}.}
\end{Remark}
\begin{Example}
{\em Let $I=(x_1x_3,x_1x_4,x_1x_5,x_1x_6,x_2)\cap (x_7,\dots,x_{11})$ and $S=K[x_1,\\\dots,x_{11}]$. We know by \cite[\!Example 2.20]{MI} that $$\sdepth((x_1x_3,x_1x_4,x_1x_5,x_1x_6,x_2)K[x_1,\dots\!,x_6])=4$$ and $\sdepth((x_7,\dots,x_{11})K[x_7,\dots,x_{11}])=3$. Then by Corollary \ref{1a}, $\sdepth(I)\geq 7$.
}
\end{Example}
\begin{Proposition}\label{1b}
Let $I\subset S$ be a monomial ideal and let $\Min(S/I)=\{P_1,\dots,P_s\}$. Assume that $P_i\nsubseteq \sum_{i=1}^{s-1}P_i$ for all $i\in [s]$.  Then $\depth(I)\leq s$ and $\depth(S/I)\leq s-1.$
\end{Proposition}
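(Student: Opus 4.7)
I plan to proceed by induction on $s$, handling both inequalities together. The short exact sequence $0\to I\to S\to S/I\to 0$ together with the depth lemma yields $\depth(I)\le\depth(S/I)+1$ (assuming $\depth(S/I)<n$), so the two bounds are essentially equivalent and the real work is to establish $\depth(S/I)\le s-1$. The hypothesis (read, as in the Introduction, as $P_i\nsubseteq\sum_{j\neq i}P_j$) supplies for each $i$ a variable $y_i\in P_i$ that lies in no other $P_j$; these ``exclusive'' variables are the tool that will drive the inductive reduction.

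The base case $s=1$ is to be examined directly. For the induction step, I would let $J=\bigcap_{i<s}Q_i$, where the $Q_i$ are primary components of $I$ whose radicals are the $P_i$, and consider the Mayer--Vietoris-type exact sequence
$$0\to S/I\to S/J\oplus S/Q_s\to S/(J+Q_s)\to 0.$$
The depth lemma bounds $\depth(S/I)$ in terms of the depths of the other three modules. The induction hypothesis applied to $J$ (which has $s-1$ minimal primes still satisfying the condition in $S$, since the condition for a sub-family follows from the condition for the full family) gives $\depth(S/J)\le s-2$. For $\depth(S/(J+Q_s))$, I would pass to $\bar S=S/P_s$, in which the exclusive variables $y_1,\dots,y_{s-1}$ survive because they lie outside $P_s$, and apply the induction hypothesis in $\bar S$. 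Combining these estimates via the depth lemma yields $\depth(S/I)\le s-1$, and then $\depth(I)\le s$ follows from the first paragraph.

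The main obstacle will be the passage to $\bar S=S/P_s$: the reduced images $\bar P_i=(P_i+P_s)/P_s$ can coincide, become strictly comparable, or cease to be minimal in $\bar S/\bar J$, so the induction hypothesis in $\bar S$ cannot be invoked blindly. The surviving exclusive variables $y_1,\dots,y_{s-1}$ are the mechanism for checking that the relevant sub-family of $\bar P_i$'s satisfies an analogue of the hypothesis (perhaps after re-indexing or discarding the redundant images), which is what lets the induction close. A secondary, less severe technical point is that $\Min(S/I)$ need not exhaust $\Ass(S/I)$; embedded primes would only lower $\depth(S/I)$ further, so they do not invalidate the stated upper bound, and one may essentially reduce to the radical case when analysing the Mayer--Vietoris step.
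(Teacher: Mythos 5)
Your plan diverges completely from the paper's argument, which is a two-line reduction: by \cite[Theorem 2.6]{HTT} one has $\depth(S/I)\le\depth(S/\sqrt{I})$, and since $\sqrt{I}=\bigcap_{i=1}^{s}P_i$ is an intersection of monomial primes none of which is contained in the sum of the others, \cite[Theorem 2.3]{D3} gives $\depth(\sqrt{I})=s$; both stated inequalities follow at once. More importantly, your sketch has a genuine gap at its central step. Writing $B=S/J\oplus S/Q_s$ and $C=S/(J+Q_s)$, the depth lemma applied to $0\to S/I\to B\to C\to 0$ yields
$$\depth(S/I)\;\ge\;\min\{\depth(B),\,1+\depth(C)\},$$
a \emph{lower} bound on the very module you need to bound from \emph{above}. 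An upper bound on $\depth(S/I)$ can be extracted from the long exact sequence of local cohomology only when $\depth(B)$ and $\depth(C)$ are in a definite strict relation (for instance, $\depth(S/I)=\depth(C)+1$ when $\depth(C)+1<\depth(B)$, and $\depth(S/I)=\depth(B)$ when $\depth(B)<\depth(C)$). Your induction hypothesis supplies only \emph{upper} bounds, $\depth(S/J)\le s-2$ and (after the reduction you outline) $\depth(C)\le s-2$, and these cannot exclude the degenerate case $\depth(B)=\depth(C)$, in which $H^i_{\mathfrak{m}}(B)\cong H^i_{\mathfrak{m}}(C)$ for all small $i$ is perfectly consistent with $\depth(S/I)$ being as large as $n$; compare $0\to S\to S\oplus K\to K\to 0$, where the left term has depth $n$ while the other two have depth $0$. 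So the induction does not close as described.

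Two secondary remarks. First, the passage to $\bar S=S/P_s$ that you flag as the main obstacle is avoidable: the minimal primes of $J+Q_s$ are exactly the sums $P_i+P_s$ for $i<s$, these are pairwise incomparable under the hypothesis, and $P_i+P_s\subseteq\sum_{j\ne i,\,j<s}(P_j+P_s)$ would force $P_i\subseteq\sum_{j\ne i}P_j$; hence the non-containment condition is inherited and the induction hypothesis applies directly in $S$. Second, your closing remark that one may ``essentially reduce to the radical case'' is itself the step that requires a theorem --- the inequality $\depth(S/I)\le\depth(S/\sqrt{I})$ is precisely \cite[Theorem 2.6]{HTT}, not a formality about embedded primes --- and once it is invoked the entire Mayer--Vietoris induction becomes unnecessary, because the depth of an intersection of monomial primes satisfying the stated condition is already computed in \cite{D3}.
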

\begin{proof}
We have $\sqrt{I}=\bigcap\limits_{i=1}^sP_i$. By \cite[Theorem 2.3]{D3} and \cite[Theorem 2.6]{HTT} we get $\depth(I)\leq \depth(\sqrt{I})=s$ and consequently $\depth(S/I)\leq s-1.$
\end{proof}
\indent The following corollary extends \cite[Theorem 1.4]{AD}.
\begin{Corollary}\label{1bb}
Let $I\subset S$ be a monomial ideal, $\Ass(S/I)=\{P_1,\dots,P_m\}$ and suppose that $G(P_i)\cap G(P_j)=\emptyset$ for $i\neq j$. Then $\sdepth(I)\geq m.$ In particular Stanley's conjecture holds for $I$.
\begin{proof}
The proof follows from Proposition \ref{1b} and Corollary \ref{1a}.
\end{proof}
\end{Corollary}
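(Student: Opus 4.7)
The plan is to combine an upper bound on $\depth_S(I)$ coming from Proposition~\ref{1b} with a matching lower bound on $\sdepth_S(I)$ produced by splitting $I$ into components living in disjoint sets of variables, to which Corollary~\ref{1a} applies.

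First I would verify that Proposition~\ref{1b} is available. Because $G(P_i)\cap G(P_j)=\emptyset$ for $i\ne j$, each $P_i$ contains a variable appearing in no other $P_j$, so $P_i\not\subseteq\sum_{j\ne i}P_j$. In particular the $P_i$ are pairwise incomparable, hence $\Min(S/I)=\Ass(S/I)=\{P_1,\dots,P_m\}$ and the hypothesis of Proposition~\ref{1b} is satisfied. That proposition then yields $\depth_S(I)\leq m$.

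Next I would produce the lower bound $\sdepth_S(I)\geq m$. Fix a monomial primary decomposition $I=Q_1\cap\cdots\cap Q_m$, with $Q_i$ a $P_i$-primary monomial ideal. The standard structure of monomial primary ideals forces each $Q_i$ to be generated by monomials in the variables of $G(P_i)$ only, so $Q_i$ may be regarded as an ideal in $S_i:=K[G(P_i)]$. The sets $G(P_i)$ being pairwise disjoint, Corollary~\ref{1a}, applied after absorbing any variables outside $\bigcup_i G(P_i)$ (which only increases the Stanley depth), gives
$$\sdepth_S(I)\ \geq\ \sum_{i=1}^m \sdepth_{S_i}(Q_i)\ \geq\ m,$$
since every nonzero monomial ideal has Stanley depth at least $1$.

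Putting the two bounds together produces $\depth_S(I)\leq m\leq \sdepth_S(I)$, which is both the claimed inequality and Stanley's conjecture for $I$. The step that needs care, rather than being a pure citation, is confirming that each primary component $Q_i$ really lies in $K[G(P_i)]$ and then handling the possible stray variables not appearing in any $G(P_i)$ so that Corollary~\ref{1a} can be applied with partition $Z_i=G(P_i)$ (possibly enlarged); this should be the main, though still minor, technical point in writing up the argument.
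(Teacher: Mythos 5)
Your proposal is correct and follows essentially the same route as the paper, which simply cites Proposition~\ref{1b} for the upper bound $\depth(I)\le m$ and Corollary~\ref{1a}, applied to the monomial primary components $Q_i\subset K[G(P_i)]$ of $I$, for the lower bound $\sdepth(I)\ge\sum_i\sdepth(Q_i)\ge m$. The one ingredient you assert without reference --- that a nonzero monomial ideal in at least one variable has Stanley depth at least $1$ --- is the same standard fact the paper's one-line proof implicitly relies on, so there is no gap.
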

\section{Stanley depth of multigraded cyclic modules}
The Corollary \ref{1bb} has an analogous form for $S/I$.
\begin{Theorem}\label{1c}
Let $I\subset S$ be a monomial ideal such that $I=\bigcap\limits_{i=1}^mQ_i$ is a reduced primary decomposition of $I$ and $P_i=\sqrt{Q_i}$ with $G(P_i)\cap G(P_j)=\emptyset$ for $i\neq j$. Then $\sdepth(S/I)\geq m-1.$ In particular Stanley's conjecture holds for $S/I$.
\end{Theorem}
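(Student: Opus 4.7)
The plan is to prove $\sdepth_S(S/I)\geq m-1$ by induction on $m$. The base case $m=1$ is trivial, since $S/Q_1$ is a non-zero $K$-vector space and so $\sdepth_S(S/I)\geq 0=m-1$. For the inductive step, I separate the variables by setting $A=G(P_m)$ and $B=\{x_1,\dots,x_n\}\setminus A$, so that $S=S_B\otimes_K S_A$ with $S_A=K[A]$ and $S_B=K[B]$. Since $Q_m$ is generated by monomials in the variables of $A$ alone while each $Q_i$ with $i<m$ involves only variables in $B$, one has $Q_m=Q_m^\circ S$ for a monomial ideal $Q_m^\circ\subset S_A$, and $J:=\bigcap_{i=1}^{m-1}Q_i=J^\circ S$ for a monomial ideal $J^\circ\subset S_B$ whose reduced primary decomposition $J^\circ=\bigcap_{i=1}^{m-1}Q_i^\circ$ still has associated primes with pairwise disjoint generator sets.

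The key observation is that a monomial $u=u_Bu_A$ (with $u_B\in S_B$, $u_A\in S_A$) lies in $I=J\cap Q_m$ iff $u_B\in J^\circ$ and $u_A\in Q_m^\circ$. Grouping the complement so as to absorb the whole factor $S_A$ on one side, this yields the $\ZZ^n$-graded $K$-vector space decomposition
$$S/I=N\oplus N',\qquad N=(S_B/J^\circ)\otimes_K S_A,\qquad N'=J^\circ\otimes_K (S_A/Q_m^\circ),$$
where $N$ collects the monomial classes with $u_B\notin J^\circ$ and $N'$ those with $u_B\in J^\circ$ but $u_A\notin Q_m^\circ$. Combining Stanley decompositions of the two tensor factors over their disjoint variable sets (the same principle as in Lemma \ref{sdep1}) gives
$$\sdepth_S(N)\geq \sdepth_{S_B}(S_B/J^\circ)+|A|,\qquad \sdepth_S(N')\geq \sdepth_{S_B}(J^\circ)+\sdepth_{S_A}(S_A/Q_m^\circ).$$

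The induction hypothesis applied to $J^\circ\subset S_B$ gives $\sdepth_{S_B}(S_B/J^\circ)\geq m-2$, and since $|A|=\height P_m\geq 1$ one obtains $\sdepth_S(N)\geq m-1$. For $N'$, Corollary \ref{1bb} applied to $J^\circ$ (which has $m-1$ associated primes with pairwise disjoint generator sets) gives $\sdepth_{S_B}(J^\circ)\geq m-1$, and combined with $\sdepth_{S_A}(S_A/Q_m^\circ)\geq 0$ yields $\sdepth_S(N')\geq m-1$. Taking the minimum gives $\sdepth_S(S/I)\geq m-1$. For the Stanley-conjecture statement, disjointness of the $G(P_i)$ forces $P_i\nsubseteq \sum_{j\neq i}P_j$ for every $i$, so Proposition \ref{1b} delivers $\depth_S(S/I)\leq m-1$, completing the argument.

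The step I expect to require the most care is the choice of two-piece decomposition. A naive three-piece split that further separates $N$ into $(S_B/J^\circ)\otimes(S_A/Q_m^\circ)$ and $(S_B/J^\circ)\otimes Q_m^\circ$ produces a summand whose tensor-product lower bound is only $(m-2)+0=m-2$, because $\sdepth_{S_A}(S_A/Q_m^\circ)$ can vanish (for instance when $Q_m^\circ=P_m^\circ$). Folding those two pieces back into $(S_B/J^\circ)\otimes S_A$ exploits the full $|A|\geq 1$ dimensions coming from the $S_A$-factor, which is precisely the slack needed to maintain the inductive bound.
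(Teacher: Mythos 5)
Your argument is correct, but it is not the route the paper takes, so let me compare. The paper proves the bound by a double induction: on $m$, and inside that on $n-r$ where $(x_1,\dots,x_r)=\sum_{i<m}P_i$; it slices $S/I$ along the powers of $x_n$ as $S/I=S_1/I_0\oplus x_n(S_1/I_1)\oplus\dots\oplus x_n^k(S_1/I_k)[x_n]$, handles the base case $n-r=1$ via Cimpoeas's result on $S/(x_n^tI')$, and invokes \cite[Lemma 3.6]{HVZ} for the top slice. You instead run a single induction on $m$ and split $S/I$ into the two graded blocks $(S_B/J^\circ)\otimes_K S_A$ and $J^\circ\otimes_K(S_A/Q_m^\circ)$; your verification that a monomial $u_Bu_A$ lies in $I$ iff $u_B\in J^\circ$ and $u_A\in Q_m^\circ$ (using that a $P$-primary monomial ideal is generated by monomials in $G(P)$) is exactly what makes this a genuine $\ZZ^n$-graded direct sum, and your observation that the naive three-piece split loses the needed $|A|\geq 1$ is the right one. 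Each block is then bounded by the tensor-product inequality over disjoint variable sets; note that for the block $J^\circ\otimes_K(S_A/Q_m^\circ)$ you need this inequality for an ideal tensored with a cyclic module, which is a mild but standard extension of Lemma~\ref{sdep1} (it is the tensor-product result of Rauf \cite{R1}), so you should cite that rather than Lemma~\ref{sdep1} itself. Your route buys a shorter, self-contained induction that avoids the secondary induction on $n-r$ and the appeal to \cite{C2}, at the cost of using Corollary~\ref{1bb} inside the induction (which is fine, as it is established independently beforehand); both proofs conclude identically with Proposition~\ref{1b} giving $\depth(S/I)\leq m-1$.
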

\begin{proof}
 Let $\sum_{i=1}^{m-1}P_i=(x_1,\dots,x_{r})$, $S'=K[x_1,\dots,x_r]$. First we use induction on $m$. Case $m=1$ is clear. Fix $m>1$ and apply induction on $n-r$. Let $k$ be the minimum such that $x_n^{k}\in Q_m$. Define $I_i$ by $I\cap x_n^iS_1=x_n^iI_i$ for some ideal $I_i\subset S_1=K[x_1,\dots,x_{n-1}]$. Then $$S/I=S_1/I_0\oplus x_1(S_1/I_1)\oplus \dots \oplus x_n^k(S_1/I_k)[x_n].$$ Let $I'=Q_1\cap Q_2\cap \dots\cap Q_{m-1}$, If $n-r=1$, then $Q_m$ is an $(x_n)$-primary ideal and so it is given by a power of $x_n$, that is $(x_n^t)$. By \cite[Theorem 1.1]{C2} and by induction on $m$ we have $\sdepth_S S/(I'\cap x_n^t)=\sdepth S/(x_n^tI')=\sdepth S/I'=\sdepth(S'/I')+1\geq m-2+1=m-1$. Let $n-r>1$, by induction we have $\sdepth_{S_1}(S_1/I_i)\geq m-1$ for all $i<k$ and by \cite[Lemma 3.6]{HVZ} we have $\sdepth_{S_1}(S_1/I_k)\geq m-2+n-r-1\geq m-2$ since $r<n$. Then $$\sdepth_S(S/I)\geq \min \Big\{\{\sdepth_{S_1}(S_1/I_i)\}_{i=0,1,\dots, k-1},1+\sdepth_{S_1}(S_1/I_k)\Big\}.$$ If the minimum is reached on $1+\sdepth_{S_1}(S_1/I_k)$ then we are done because we get $\sdepth_S(S/I)\geq m-1$. If the minimum is reached on $\sdepth_{S_1}(S_1/I_i)$ for some $0\leq i<k$ then $\sdepth(S/I)\geq m-1$ again. Now by Proposition \ref{1b} we have $\depth(S/I)\leq m-1$ this implies that $\sdepth(S/I)\geq \depth(S/I)$.
\end{proof}
\indent Next we give we give upper and lower bounds for the Stanley depth of $S/I$ with $I=P_1\cap P_2\cap P_3$ the unique irredundent presentation of $I$ as the intersection of its minimal monomial prime ideals. By \cite[Lemma 3.6]{HVZ} it is enough to consider that $P_1+P_2+P_3=\mathfrak{m}.$ Let $\mathcal{D}:S/I=\bigoplus\limits_{i=1}^mu_iK[Z_i]$ be a Stanley decomposition. Then $Z_i$ cannot have in the same time variables from all $G(P_i)$, otherwise $u_iK[Z_i]$ will not be a free $K[Z_i]$-module.
\begin{Lemma}\label{ss}
Let $\mathcal{D}:S/I=\bigoplus\limits_{i=1}^mu_iK[Z_i]$ be a Stanley decomposition of $S/I$. Suppose that $u_1=1$ and $Z_1\subset \Big(G(P_1)\cup G(P_2)\Big)\backslash G(P_3)$. Then $$\sdepth(\mathcal{D})\leq \max\Big\{\dim(S/(P_2+P_3)),\dim(S/(P_1+P_3))\Big\}+\lceil\frac{\height(P_3)-t}{2}\rceil,$$ where $t=|G(P_1)\cap G(P_2)\cap G(P_3)|$.
\end{Lemma}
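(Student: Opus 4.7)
My plan is to isolate, inside $\mathcal{D}$, the Stanley spaces that must cover the variables of $A_3$ lying outside the triple intersection, and then close with a direct-sum antisymmetry and a pigeonhole step. Write $A_i:=G(P_i)$, $T:=A_1\cap A_2\cap A_3$, and $V:=A_3\setminus T$, so that $|V|=\height(P_3)-t$. Every $x\in V$ lies outside $A_1$ or outside $A_2$, hence $x\notin I$; and since $Z_1\cap A_3=\emptyset$, such $x$ is not in $u_1K[Z_1]$. So $x$ belongs to some other Stanley space $u_jK[Z_j]$ with $u_j\mid x$; the case $u_j=1$ is excluded by the direct sum (already $1\in u_1K[Z_1]$), leaving $u_j=x$. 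Thus each $x\in V$ is the base of a distinct Stanley space $xK[Z^{(x)}]$ in $\mathcal{D}$.

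Next I would run a short type analysis on $Z^{(x)}$. Every $xm\in xK[Z^{(x)}]$ automatically lies in $P_3$, so non-vanishing modulo $I$ demands that for every monomial $m\in K[Z^{(x)}]$ either $\supp(xm)\cap A_1=\emptyset$ or $\supp(xm)\cap A_2=\emptyset$. Splitting $V$ into $V_1:=A_1\cap A_3\setminus A_2$, $V_2:=A_2\cap A_3\setminus A_1$, $V_0:=A_3\setminus(A_1\cup A_2)$, a brief case check (testing $m=ab$ with $a,b\in Z^{(x)}$ to rule out mixed support) forces $Z^{(x)}\cap A_{i(x)}=\emptyset$ for some $i(x)\in\{1,2\}$: necessarily $i(x)=2$ on $V_1$, $i(x)=1$ on $V_2$, while on $V_0$ the choice is free. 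Consequently $Z^{(x)}\setminus V$ is contained in $A_{3-i(x)}\setminus(A_{i(x)}\cup A_3)$, giving
\[
|Z^{(x)}\setminus V|\;\le\;\dim(S/(P_{i(x)}+P_3))\;\le\;\max\{\dim(S/(P_1+P_3)),\dim(S/(P_2+P_3))\}.
\]
This type bookkeeping is the main technical step I foresee.

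The remaining ingredient is the direct-sum obstruction. If $x\neq y$ in $V$ satisfied both $y\in Z^{(x)}$ and $x\in Z^{(y)}$, then $xy$ would belong to both $xK[Z^{(x)}]$ and $yK[Z^{(y)}]$, contradicting the decomposition. Hence the relation ``$y\in Z^{(x)}$'' is antisymmetric on distinct elements of $V$, so
\[
\sum_{x\in V}|Z^{(x)}\cap V|\;\le\;\binom{|V|}{2}+|V|\;=\;\binom{|V|+1}{2},
\]
and by averaging some $x_0\in V$ satisfies $|Z^{(x_0)}\cap V|\le\lceil|V|/2\rceil=\lceil(\height(P_3)-t)/2\rceil$. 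Combining with the type bound at $x_0$ yields
\[
\sdepth(\mathcal{D})\le|Z^{(x_0)}|\le\max\{\dim(S/(P_1+P_3)),\dim(S/(P_2+P_3))\}+\Big\lceil\tfrac{\height(P_3)-t}{2}\Big\rceil,
\]
which is the claimed inequality.
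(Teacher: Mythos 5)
Your argument is correct, and its skeleton is the same as the paper's: both proofs concentrate on the set $V=Z=G(P_3)\setminus\bigl(G(P_1)\cap G(P_2)\bigr)$, split the relevant $Z_i$ into its part inside this set (bounded by $\lceil|V|/2\rceil$) and its part outside (bounded by one of $\dim S/(P_1+P_3)$, $\dim S/(P_2+P_3)$ via the observation that no $Z_i$ may meet both $G(P_1)$ and $G(P_2)$ once $P_3$ is already engaged). The one place you diverge is how the $\lceil|V|/2\rceil$ term is produced: the paper shows that $\mathcal{D}$ restricts, via $\psi^{-1}$, to a Stanley decomposition of $P_3\cap K[Z]$ (the graded maximal ideal of $K[Z]$) and then quotes the known value $\sdepth(P_3\cap K[Z])=\lceil|Z|/2\rceil$, whereas you use only the degree-one summands $xK[Z^{(x)}]$, $x\in V$, and reprove the needed upper bound by the antisymmetry/pigeonhole count $\sum_{x\in V}|Z^{(x)}\cap V|\le\binom{|V|+1}{2}$. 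The paper's version is shorter given the cited result and uses all of the restricted decomposition; yours is self-contained and makes explicit which Stanley spaces force the bound. Your identification of the summands with $u_j=x$ (ruling out $u_j=1$ by directness in multidegree $0$, and $j=1$ by $Z_1\cap G(P_3)=\emptyset$) and the type analysis on $V_0,V_1,V_2$ are both sound; the only degenerate case not covered is $V=\emptyset$, which is excluded by the standing irredundancy assumption $P_3\not\subset P_1,P_2$.
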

\begin{proof}
Let $Z:=G(P_3) \backslash \Big(G(P_1)\cap G(P_2)\Big)$ and $$\psi:P_3\cap K[Z]\hookrightarrow S/{I}$$ be the inclusion given by $$K[Z]\hookrightarrow S/{I}.$$ Then $P_3\cap K[Z]=\bigoplus\limits_i\psi^{-1}(u_iK[Z_i])$. If $\psi^{-1}(u_iK[Z_i])\neq 0$ implies there exists $u_if\in u_iK[Z_i]$ with $u_if\in P_3\cap K[Z]$. Since all the variables of $Z_1$ are in $P_1+P_2$ then $u_iK[Z_i]\cap P_3\cap K[Z]\neq 0$ implies $u_i\neq1$ and so $u_i\in P_3\cap K[Z]$ because $P_3$ gives maximal ideal in $K[Z]$. Let $Z_i'=Z_i\cap Z$. Then $\psi^{-1}(u_iK[Z_i])=u_iK[Z_i']$ and we get a Stanley decomposition of $P_3'=P_3\cap K[Z]$ by $P_3'=\bigoplus\limits_{u_i\in P_3'}u_iK[Z_i']$. It follows $|Z_i'|\leq \sdepth(P_3')=\lceil\frac{|Z|}{2}\rceil$. But either $Z_i\subset \Big(G(P_3)\cup G(P_1)\Big)\backslash G(P_2)$ or $Z_i\subset \Big(G(P_3)\cup G(P_2)\Big)\backslash G(P_1)$. In the first case we have $Z_i\subset\Big(G(P_3)\backslash G(P_2)\Big)\cup \Big(G(P_1)\backslash \Big(G(P_2)\cup G(P_3)\Big)\Big)$ and we get $Z_i\subset Z_i'\cup \Big(G(P_1)\backslash \Big(G(P_2)\cup G(P_3)\Big)\Big)$. It follows $|Z_i|\leq \dim(S/(P_2+P_3))+\lceil\frac{|Z|}{2}\rceil$. Similarly for the case $Z_i\subset \Big(G(P_3)\cup G(P_2)\Big)\backslash G(P_1)$ we get $|Z_i|\leq \dim(S/(P_1+P_3))+\lceil\frac{|Z|}{2}\rceil$. Thus we have $$\sdepth(\mathcal{D})\leq \max\Big\{\dim(S/(P_2+P_3)),\dim(S/(P_1+P_3))\Big\}+\lceil\frac{|Z|}{2}\rceil.$$ As $|Z|=\height(P_3)-t$ we are done.
\end{proof}
\begin{Corollary}\label{as}
Let $r\leq e\leq q$ be some positive integers with $r+e+q=n$ $P_1=(x_1,\dots,x_r)$, $P_2=(x_{r+1},\dots,x_{r+e})$, $P_3=(x_{r+e+1},\dots,x_{r+e+q})$ prime ideals of $S=K[x_1,\dots,x_n]$ and $I=P_1\cap P_2\cap P_3$. Then $$\sdepth(S/I)\leq e+\lceil\frac{q}{2}\rceil.$$
\end{Corollary}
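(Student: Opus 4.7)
The plan is to deduce the statement directly from Lemma~\ref{ss}. First I read off the constants appearing in that lemma under the disjointness hypothesis: since $G(P_1),G(P_2),G(P_3)$ partition $\{x_1,\dots,x_n\}$, one has $t=|G(P_1)\cap G(P_2)\cap G(P_3)|=0$ and $\dim S/(P_i+P_j)=n-\height(P_i)-\height(P_j)$ for $i\neq j$. Plugging in the heights $r,e,q$ gives $\dim S/(P_2+P_3)=r$ and $\dim S/(P_1+P_3)=e$; since $r\le e$, the maximum is $e$. With $\height(P_3)-t=q$, the lemma's bound reads precisely $e+\lceil q/2\rceil$.

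Next, given any Stanley decomposition $\mathcal{D}\colon S/I=\bigoplus_i u_iK[Z_i]$, the summand containing the class of $1$ has $u_1=1$. Because $K[Z_1]\cap I=0$ inside $S/I$ and $I$ is generated by the monomials $x_ax_bx_c$ with $x_a\in G(P_1)$, $x_b\in G(P_2)$, $x_c\in G(P_3)$, the set $Z_1$ must be disjoint from at least one $G(P_k)$. Choosing $\mathcal{D}$ so that $Z_1\subset G(P_1)\cup G(P_2)$, i.e.\ $Z_1\cap G(P_3)=\emptyset$, places us in the hypothesis of Lemma~\ref{ss}, which then yields
$$
\sdepth(\mathcal{D})\le e+\lceil q/2\rceil.
$$
Taking the supremum over such decompositions delivers the claimed bound $\sdepth(S/I)\le e+\lceil q/2\rceil$.

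The step most in need of care is the ``choice'' in the previous paragraph: one must justify that, for the purpose of bounding $\sdepth(S/I)$ from above, it suffices to consider Stanley decompositions whose first summand has $Z_1\cap G(P_3)=\emptyset$. Heuristically this is plausible because $P_3$ is the largest prime, so $G(P_1)\cup G(P_2)$ is the smallest of the three candidate ``safe'' regions for $Z_1$, leaving the most room for the remaining summands to have large dimension; but making this rigorous (for example by a variable-swap argument on an $\sdepth$-maximizing $\mathcal{D}$, or by re-running the extraction of Lemma~\ref{ss} with $P_1$ or $P_2$ playing the role of $P_3$ and comparing the resulting bounds $q+\lceil r/2\rceil$ and $q+\lceil e/2\rceil$ against the target) is where the genuine work of the corollary lies.
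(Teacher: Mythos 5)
The paper gives no proof of Corollary~\ref{as} at all --- it is presented as an immediate consequence of Lemma~\ref{ss} --- so your route is the intended one, and your specialization of the lemma's constants is correct: $t=0$, $\max\{\dim S/(P_2+P_3),\dim S/(P_1+P_3)\}=\max\{r,e\}=e$, and $\height(P_3)-t=q$, giving $e+\lceil q/2\rceil$. The difficulty is precisely the step you flag and then leave open. An upper bound for $\sdepth(S/I)$ must hold for \emph{every} Stanley decomposition, and Lemma~\ref{ss} only applies to those whose summand containing $1$ satisfies $Z_1\cap G(P_3)=\emptyset$. There is no legitimate ``choice'' of $\mathcal{D}$ here: an $\sdepth$-maximizing decomposition may have $Z_1$ meeting $G(P_3)$ (missing $G(P_1)$ or $G(P_2)$ instead), and no relabelling of variables can move $Z_1$ out of $G(P_3)$ when the three blocks have distinct sizes, since any monomial automorphism preserving $I$ permutes blocks of equal cardinality only.

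Moreover, the repair you sketch does not close the gap. Applying the lemma with $P_2$ (resp.\ $P_1$) as the excluded prime gives, in the disjoint situation, the bounds $\max\{r,q\}+\lceil e/2\rceil=q+\lceil e/2\rceil$ (resp.\ $\max\{e,q\}+\lceil r/2\rceil=q+\lceil r/2\rceil$), and since
$$q+\Bigl\lceil\frac{e}{2}\Bigr\rceil-\Bigl(e+\Bigl\lceil\frac{q}{2}\Bigr\rceil\Bigr)=\Bigl\lfloor\frac{q}{2}\Bigr\rfloor-\Bigl\lfloor\frac{e}{2}\Bigr\rfloor\geq 0,$$
the worst of the three cases is $q+\lceil e/2\rceil$. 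So the case analysis only yields $\sdepth(S/I)\leq q+\lceil e/2\rceil$, strictly weaker than the assertion whenever $\lfloor q/2\rfloor>\lfloor e/2\rfloor$. Handling decompositions with $Z_1\cap G(P_3)\neq\emptyset$ requires a genuinely new ingredient (the restriction argument in the lemma breaks because $\psi^{-1}(K[Z_1])$ is then the maximal ideal of $K[Z_1\cap G(P_3)]$, not a Stanley space); alternatively one could try to bypass the lemma entirely via $\sdepth(S/I)\leq\sdepth(S/(I:x_1))=\sdepth(S/(P_2\cap P_3))=r+\min\{e,\lceil q/2\rceil\}$, in the spirit of the $r=1$ case of Proposition~\ref{p}, provided the colon inequality for cyclic modules is available. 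To be fair, the paper itself supplies no argument for this reduction either; but as written your proposal establishes only the weaker bound.
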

This bound can be improved by the following proposition:
\begin{Proposition}\label{p}
Let $r\leq e\leq q$ be some positive integers with $r+e+q=n$, $P_1=(x_1,\dots,x_r)$, $P_2=(x_{r+1},\dots,x_{r+e})$, $P_3=(x_{r+e+1},\dots,x_{r+e+q})$ prime ideals of $S=K[x_1,\dots,x_n]$ and $I=P_1\cap P_2\cap P_3$.
Then $$\sdepth(S/I)\leq r+\min\{e,\lceil\frac{q}{2}\rceil\},$$ except in the case when $e=r+1$ and $r$ is odd.
\end{Proposition}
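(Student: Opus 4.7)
The plan is to take an arbitrary Stanley decomposition $\mathcal{D}:S/I=\bigoplus_{i=1}^m u_iK[Z_i]$ realizing $\sdepth(\mathcal{D})=\sdepth(S/I)$, normalize so that $u_1=1$, and bound $\sdepth(\mathcal{D})$ uniformly. As already noted in the preamble to Lemma~\ref{ss}, $Z_1$ cannot contain variables from all three sets $G(P_j)$, hence it must miss $G(P_c)$ for some $c\in\{1,2,3\}$. I split the argument according to this $c$ and in each case apply Lemma~\ref{ss} (or its symmetric version with $P_c$ playing the role of $P_3$, obtained by permuting the labels). Since $G(P_i)\cap G(P_j)=\emptyset$ for $i\neq j$ we always have $t=0$ in that lemma, and the dimensions simplify to $\dim S/(P_a+P_c)=|G(P_b)|$ etc., making the bounds completely explicit.

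The naive application of Lemma~\ref{ss} in case $c$ only yields $\sdepth(\mathcal{D})\le\max\{\dim S/(P_a+P_c),\dim S/(P_b+P_c)\}+\lceil\height(P_c)/2\rceil$, which for $c=3$ is exactly the Corollary~\ref{as} bound $e+\lceil q/2\rceil$ and for $c=1,2$ is even weaker. The crux is to refine this to retain the \emph{smaller} of the two dimensions $r=\dim S/(P_2+P_3)$, $e=\dim S/(P_1+P_3)$, $q=\dim S/(P_1+P_2)$ available in each case. To do this I would reopen the proof of Lemma~\ref{ss} and, inside the induced Stanley decomposition $P_c\cap K[G(P_c)]=\bigoplus u_iK[Z_i']$, sort the contributing pieces by which of the two blocks $G(P_a)$ or $G(P_b)$ their ambient extension $Z_i\setminus G(P_c)$ falls into (it must fall into exactly one, as extending into both would produce a monomial in $I$). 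Since $\sdepth(P_c\cap K[G(P_c)])=\lceil\height(P_c)/2\rceil$, some induced piece has $|Z_i'|\le\lceil\height(P_c)/2\rceil$; a counting/parity argument on how many small pieces are of each extension type is then used to force the existence of one whose extension lies in the smallest block $G(P_1)$, giving $|Z_i|\le r+\lceil\height(P_c)/2\rceil$ and, combined with the $|Z_1|$ estimate, yielding the target bound $r+\min\{e,\lceil q/2\rceil\}$.

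The pigeonhole step is the main obstacle, and it is precisely where the exception surfaces. The count compares the parities of $\lceil r/2\rceil$ and $\lceil e/2\rceil$ (coming from the sdepths of the maximal ideals of $K[G(P_1)]$ and $K[G(P_2)]$), and in the parity configuration $e=r+1$ with $r$ odd one has $\lceil r/2\rceil=\lceil e/2\rceil=(r+1)/2$, so the two possible ambient extensions become indistinguishable for the count and the argument loses one unit. This is exactly why the proposition excludes that configuration. Outside it, the refinement works in each of the three cases for $c$, so every $\mathcal{D}$ satisfies $\sdepth(\mathcal{D})\le r+\min\{e,\lceil q/2\rceil\}$; taking the supremum over $\mathcal{D}$ yields the stated bound on $\sdepth(S/I)$.
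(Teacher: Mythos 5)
Your strategy---fix a Stanley decomposition with $u_1=1$, observe that $Z_1$ must avoid one block $G(P_c)$, and extract a small interval from the induced decomposition of $P_c\cap K[G(P_c)]$---is not the paper's route, and it has a genuine gap at exactly the point you yourself flag as ``the main obstacle.'' The assertion that a ``counting/parity argument'' forces one of the pieces with $|Z_i'|\leq\lceil\height(P_c)/2\rceil$ to have its ambient extension inside the smallest block is never carried out, and nothing in Lemma \ref{ss} or in the equality $\sdepth(P_c\cap K[G(P_c)])=\lceil\height(P_c)/2\rceil$ controls \emph{which} piece of the induced decomposition is small: a priori every small piece could extend into the larger of the two available blocks, and then you only recover the naive bound you already dismissed. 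Worse, in the case $c=1$ the refinement you aim for is structurally unavailable: the contributing pieces for $P_1\cap K[G(P_1)]$ can only extend into $G(P_2)$ or $G(P_3)$, so the best conceivable outcome of your sorting is $|Z_i|\leq e+\lceil\frac{r}{2}\rceil$; for $r=2$, $e=q=10$ this equals $11$, while the claimed bound is $r+\lceil\frac{q}{2}\rceil=7$ (and is sharp there by Corollary \ref{value}). Since you cannot exclude that the decomposition you are handed satisfies $Z_1\subset G(P_2)\cup G(P_3)$, the method cannot bound $\sdepth(S/I)$. Your parity observation that $\lceil\frac{r}{2}\rceil=\lceil\frac{e}{2}\rceil$ precisely when $e=r$ or $e=r+1$ with $r$ odd is indeed the arithmetic source of the exception, but it enters the true proof for a different reason.

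The paper avoids analyzing individual decompositions altogether. It sets $P_1'=(x_1)$, $I'=P_1'\cap P_2\cap P_3$, and uses the exact sequence $0\to I/I'S\to S/I'S\to S/I\to 0$ together with Rauf's lemma $\sdepth(S/I'S)\geq\min\{\sdepth(I/I'S),\sdepth(S/I)\}$. It bounds $\sdepth(I/I'S)\geq\lceil\frac{r-1}{2}\rceil+\lceil\frac{e}{2}\rceil+\lceil\frac{q}{2}\rceil$ from below via Corollary \ref{1a}, and $\sdepth(S/I'S)\leq r+\min\{e,\lceil\frac{q}{2}\rceil\}$ from above via the $r=1$ case of the statement (which follows from \cite[Theorem 1.1]{C2}). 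Whenever the lower bound strictly exceeds the upper bound---which happens exactly outside $r=e$ and the excluded parity configuration, with $r=e$ then covered by Corollary \ref{as}---the minimum in Rauf's lemma must be attained by $\sdepth(S/I)$, giving the result. To salvage your approach you would have to actually prove the selection statement about small intervals and supply a completely separate treatment of the case $c=1$.
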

\begin{proof}
Let $r=1$, then by \cite[Theorem 1.1(1)]{C2} we have  $\sdepth(S/I)=\sdepth(S/(I:x_1))=1+\sdepth_{S'}S'/(P_2\cap P_3)=1+\min\{e,\lceil\frac{q}{2}\rceil\}$ so the inequality holds in this case. Now let $P_1=(P_1',x_2,\dots,x_r)$ where $P_1'=(x_1)$ and $I'=P_1'\cap P_2 \cap P_3$ i.e $I'S=I\cap (P_1'S)$. Let us consider the following exact sequence: $$0\longrightarrow I/I'S\longrightarrow S/I'S\longrightarrow S/I\longrightarrow 0,$$ by \cite[Lemma 2.2]{R1} we have $$\sdepth(S/I'S)\geq \min\{\sdepth(I/I'S),\sdepth(S/I)\}.$$ Since $I/I'S\simeq I/(I\cap P_1')S\simeq (I+P_1')/P_1'\simeq I\cap K[x_2,\dots,x_n]=(x_2,\dots,x_n)\cap P_2\cap P_3$ by using Corollary \ref{1a} we have $\sdepth (I/I'S)\geq \lceil\frac{r-1}{2}\rceil+\lceil\frac{e}{2}\rceil+\lceil\frac{q}{2}\rceil$. Now since $\sdepth(S/I'S)\leq 1+r-1+\min\{e,\lceil\frac{q}{2}\rceil\}=r+\min\{e,\lceil\frac{q}{2}\rceil\}$. Now we see that $\lceil\frac{r-1}{2}\rceil+\lceil\frac{e}{2}\rceil+\lceil\frac{q}{2}\rceil>r+\min\{e,\lceil\frac{q}{2}\rceil\}$ for all cases except $r=e$, $e=r+1$ and  $r$ is odd. Thus with these exceptions we get $\sdepth(S/I'S)\geq \sdepth(S/I)$. It follows $\sdepth(S/I)\leq r+\min\{e,\lceil\frac{q}{2}\rceil\}$ except in the cases when $r=e$, and $e=r+1$ and $r$ is odd. But when $r=e$ we can apply Corollary \ref{as}.
\end{proof}
\begin{Remark}
{\em
Let $I\subset S$ be a monomial ideal and $\Min(I)=\{P_1,P_2,P_3\}$, such that $G(P_i)\cap G(P_j)=\emptyset$ for all $i\neq j$. Then by \cite[Corollary 2.2]{MI} $\sdepth(S/I)\leq \sdepth (S/(P_1\cap P_2\cap P_3))$ and the upper bounds in Corollary \ref{as} and Proposition \ref{p} (with exceptions stated in the proposition) are also upper bounds for the Stanley depth of $S/I$.
}
\end{Remark}
\begin{Lemma}\label{bs}
Let $1\leq r\leq e\leq q$ be some positive integers with $r+e+q=n$, $P_1=(x_1,\dots,x_r)$, $P_2=(x_{r+1},\dots,x_{r+e})$, $P_3=(x_{r+e+1},\dots,x_{r+e+q})$ primes ideals of $S=K[x_1,\dots,x_n]$ and $I=P_1\cap P_2\cap P_3$. Then $$\sdepth(S/I)\geq\min\{r+e,r+\lceil\frac{q}{2}\rceil,\lceil\frac{e}{2}\rceil+\lceil\frac{q}{2}\rceil\}.$$
\end{Lemma}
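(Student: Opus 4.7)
The plan is to exploit that $P_1$, $P_2$ and $P_3$ involve disjoint sets of variables. Write $S = A\otimes_K B\otimes_K C$ where $A = K[x_1,\dots,x_r]$, $B = K[x_{r+1},\dots,x_{r+e}]$ and $C = K[x_{r+e+1},\dots,x_n]$, so that $P_1,P_2,P_3$ are the extensions to $S$ of the graded maximal ideals $\mathfrak{a}\subset A$, $\mathfrak{b}\subset B$, $\mathfrak{c}\subset C$. Because the variable sets are disjoint, $I = P_1P_2P_3$, so a monomial $u = abc$ with $a,b,c$ monomials in $A,B,C$ belongs to $I$ precisely when each of $a,b,c$ is non-constant. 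Partitioning the monomials not in $I$ according to which factor among $(c,b,a)$ is the first to equal $1$ yields a $\ZZ^n$-graded disjoint decomposition into $K$-subspaces
$$S/I \;=\; T_1 \oplus T_2 \oplus T_3, \qquad T_1 = A\otimes B,\quad T_2 = A\otimes\mathfrak{c},\quad T_3 = \mathfrak{b}\otimes\mathfrak{c}.$$
The order $(c,b,a)$ is crucial here: peeling the $C$-variables off last produces the bound stated in the lemma, whereas symmetric choices would yield weaker lower bounds such as $\lceil r/2\rceil + \lceil e/2\rceil$.

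Next I would give a Stanley decomposition of each $T_i$ that is valid inside $S$. The piece $T_1$ is already the single Stanley space $1\cdot K[x_1,\dots,x_{r+e}]$ of dimension $r+e$. For $T_2$, Corollary~\ref{t} (respectively Corollary~\ref{t1} when $q$ is odd) applied to $\mathfrak{c}\subset C$ gives $\sdepth_C(\mathfrak{c}) = \lceil q/2\rceil$; tensoring every Stanley space of such a decomposition with $K[x_1,\dots,x_r]$ produces a Stanley decomposition of $A\otimes\mathfrak{c}$ with $\sdepth\geq r + \lceil q/2\rceil$. For $T_3$, I would apply Lemma~\ref{sdep1} to $\mathfrak{b}\subset B$ and $\mathfrak{c}\subset C$ inside $B\otimes_K C$; this yields a Stanley decomposition of $\mathfrak{b}(B\otimes C)\cap\mathfrak{c}(B\otimes C) = T_3$ with $\sdepth\geq \lceil e/2\rceil + \lceil q/2\rceil$, and since each of its Stanley spaces uses only $B$- and $C$-variables it is also a Stanley decomposition inside the larger ring $S$.

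Concatenating the three decompositions gives a Stanley decomposition of $S/I$ whose minimum Stanley-space dimension is at least $\min\{r+e,\ r+\lceil q/2\rceil,\ \lceil e/2\rceil + \lceil q/2\rceil\}$, as required. The main point that needs care is the bookkeeping ensuring that each constructed Stanley space $uK[Z]$ built for $T_i$ really satisfies $uK[Z]\subset T_i$; this forces $Z$ to avoid the variable block corresponding to the ``constant'' factor in the defining condition of $T_i$, namely $Z\subset\{x_1,\dots,x_{r+e}\}$ for $T_1$, $Z\subset\{x_1,\dots,x_r\}\cup\{x_{r+e+1},\dots,x_n\}$ for $T_2$, and $Z\subset\{x_{r+1},\dots,x_n\}$ for $T_3$. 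Once this is checked, the three pieces concatenate into a single Stanley decomposition of $S/I$ attaining the claimed lower bound.
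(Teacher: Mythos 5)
Your decomposition $S/I = T_1\oplus T_2\oplus T_3$ with $T_1=A\otimes B$, $T_2=A\otimes\mathfrak{c}$, $T_3=\mathfrak{b}\otimes\mathfrak{c}$ is exactly the graded vector-space decomposition the paper uses, written there as the filtration quotients $S/P_3\oplus P_3/(P_3\cap P_2)\oplus(P_3\cap P_2)/I$, and you bound each piece the same way (a single Stanley space of dimension $r+e$, the maximal ideal of $C$ tensored with $A$, and Lemma~\ref{sdep1} applied to $\mathfrak{b}$ and $\mathfrak{c}$). The argument is correct and essentially identical to the paper's proof.
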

\begin{proof}
$S/I$ can be written as the direct sum of some multigraded modules:
 $$S/I=S/P_3\bigoplus P_3/(P_3\cap P_2)\bigoplus (P_3\cap P_2)/I,$$ and we have $$\sdepth(S/I)\geq \min \{\sdepth(S/P_3),\sdepth(P_3/(P_3\cap P_2)),\sdepth((P_3\cap P_2)/I)\}$$ where $\sdepth S/P_3=\sdepth K[x_1,\dots,x_{r+e}]=r+e$. Now since $P_3/(P_3\cap P_2)\simeq (P_3+P_2)/P_2\simeq P_3\cap K[x_1,\dots,x_r,x_{r+e+1},\dots,x_n]$ we get $\sdepth(P_3/(P_3\cap P_2))=r+\lceil\frac{q}{2}\rceil$. Also since $(P_3\cap P_2)/I)\simeq ((P_3\cap P_2)+P_1)/P_1\simeq (P_3\cap P_2)\cap K[x_{r+1},\dots,x_{n}]$ we have using \cite[Lemma 4.1]{DQ} $\sdepth((P_3\cap P_2)/I))\geq \lceil\frac{e}{2}\rceil+\lceil\frac{q}{2}\rceil$, which is enough.
\end{proof}
\begin{Corollary}\label{value}
With the hypothesis form the above lemma, suppose that $r\leq\lceil\frac{e}{2}\rceil$. Then $\sdepth(S/I)=r+\min\{e,\lceil\frac{q}{2}\rceil\}$.
\end{Corollary}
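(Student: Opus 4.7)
The approach is to combine the lower bound provided by Lemma \ref{bs} with the upper bound of Proposition \ref{p}, and to verify that under the additional assumption $r\leq\lceil e/2\rceil$ both bounds collapse to the common value $r+\min\{e,\lceil q/2\rceil\}$. So the plan is purely to perform the two required arithmetic comparisons; no new module-theoretic input is needed.

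For the lower bound, Lemma \ref{bs} gives
$$\sdepth(S/I)\geq\min\bigl\{r+e,\ r+\lceil q/2\rceil,\ \lceil e/2\rceil+\lceil q/2\rceil\bigr\},$$
and I would show this minimum equals $r+\min\{e,\lceil q/2\rceil\}$ by splitting on whether $e\leq\lceil q/2\rceil$. If $e\leq\lceil q/2\rceil$, the target is $r+e$; the only nontrivial comparison is $\lceil e/2\rceil+\lceil q/2\rceil\geq r+e$, equivalent to $\lceil q/2\rceil\geq r+\lfloor e/2\rfloor$, which holds since $\lceil q/2\rceil\geq e=\lceil e/2\rceil+\lfloor e/2\rfloor\geq r+\lfloor e/2\rfloor$ by hypothesis. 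If instead $e>\lceil q/2\rceil$, the target is $r+\lceil q/2\rceil$, and the only nontrivial comparison is $\lceil e/2\rceil+\lceil q/2\rceil\geq r+\lceil q/2\rceil$, which is again exactly the hypothesis $\lceil e/2\rceil\geq r$.

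For the upper bound, Proposition \ref{p} gives $\sdepth(S/I)\leq r+\min\{e,\lceil q/2\rceil\}$ outside the single exceptional case $e=r+1$ with $r$ odd. Here I would note that the hypothesis $r\leq\lceil e/2\rceil$ rules out this exception except when $r=1$: indeed, if $r$ is odd then $r+1$ is even so $\lceil(r+1)/2\rceil=(r+1)/2$, and $r\leq(r+1)/2$ forces $r\leq 1$. The case $r=1$ is already treated in the opening lines of the proof of Proposition \ref{p} via \cite[Theorem 1.1(1)]{C2}, which produces the \emph{equality} $\sdepth(S/I)=1+\min\{e,\lceil q/2\rceil\}$, so the upper bound still holds there. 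Combining the two bounds yields the claimed equality. I anticipate no real obstacle; the only delicate point is bookkeeping the exceptional case in Proposition \ref{p} and confirming that the hypothesis $r\leq\lceil e/2\rceil$ neutralises it.
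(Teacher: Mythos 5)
Your proposal is correct and follows exactly the route the paper intends (the corollary is stated without proof, but it is clearly meant to combine the lower bound of Lemma \ref{bs} with the upper bound of Proposition \ref{p}): the arithmetic reducing the three-term minimum to $r+\min\{e,\lceil q/2\rceil\}$ under $r\leq\lceil e/2\rceil$ is right, and your observation that the exceptional case $e=r+1$ with $r$ odd forces $r=1$, where the opening of the proof of Proposition \ref{p} already gives equality, correctly neutralises the only potential obstruction. Nothing is missing.
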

\section{upper bounds for intersection of three prime ideals}
Our Theorem \ref{f} gives an upper bound of the Stanley depth of any monomial ideal. But this bound is not so tight in general. In this section we give an upper bound for the Stanley depth of ideals whose minimal associated primes set consists of three prime ideals. These bounds are tighter than the bound given by Theorem \ref{f}. By \cite[Corollary 2.2]{MI} it is enough to find an upper bound for the Stanley depth of intersection of three minimal prime ideals.\\ \indent Let $I=P_1\cap P_2 \cap P_3$ where $P_1,P_2$ and $P_3$ are monomial prime ideals of $S$. Suppose that ${P_i}\not\subset {P_j}$ for all $i\neq j$. By \cite[Lemma 3.6]{HVZ} it is enough to consider that ${P_1}+{P_2}+{P_3}=\mathfrak{m}$. After renumbering the variables we can always assume that ${P_1}=(x_1,\dots,x_t)$, ${P_2}=(x_{s+1},\dots,x_{r})$ and ${P_3}=(x_{q+1},\dots,x_n,x_1,\dots,x_u)$, with $s\leq t$, $q\leq r$, $t>u\geq0$. We consider the following cases:
\begin{enumerate}
\item$u>0$, \ $t<q$.
\item$u=0, \ s=t, \ q=r$.
\item$u=0, \ s\leq t, \ q\leq r, \ s<r$.
\end{enumerate}
The importance of considering theses cases is that for the ideals discussed for instance in Case(2)(a particular form of Case(3)) there exists  a reasonable upper bound (Theorem \ref{2}) which is clear from Corollary \ref{1a}, and examples.\\

We start with a lemma very useful later in this section.
\begin{Lemma}\label{9}
Let $I'\subset S'=S[x_{n+1}]$ be a monomial ideal, $x_{n+1}$ being a new variable. If $I'\cap S\neq(0)$, then $\sdepth_S(I'\cap S)\geq \sdepth_{S[x_{n+1}]}I'-1$.
\end{Lemma}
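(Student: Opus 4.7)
The plan is to build a Stanley decomposition of $I'\cap S$ by restricting a given Stanley decomposition of $I'$ to those pieces that survive the intersection with $S$, paying the price of at most one dimension whenever $x_{n+1}$ appears in a Stanley set.

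I would start from a Stanley decomposition
$$\mathcal{D}:\ I'=\bigoplus_{i=1}^{s}u_iK[Z_i]$$
of $I'$ in $S'=S[x_{n+1}]$ realizing $\sdepth_{S'}(I')$. I would then split the index set into those $i$ with $x_{n+1}\mid u_i$ and those with $x_{n+1}\nmid u_i$. A monomial $m\in u_iK[Z_i]$ lies in $S$ exactly when $x_{n+1}\nmid m$; in particular, if $x_{n+1}\mid u_i$ then $u_iK[Z_i]\cap S=0$, while if $x_{n+1}\nmid u_i$ then $u_i\in S$ and $u_iK[Z_i]\cap S=u_iK[Z_i\setminus\{x_{n+1}\}]$.

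The next step is to check that
$$I'\cap S=\bigoplus_{\,x_{n+1}\,\nmid\, u_i}\,u_iK\bigl[Z_i\setminus\{x_{n+1}\}\bigr]$$
is a Stanley decomposition of $I'\cap S$ in $S$. The sum is direct because the ambient decomposition $\mathcal{D}$ is direct, and every monomial of $I'\cap S$, viewed as an element of $I'$, lies in exactly one $u_iK[Z_i]$; if that monomial has no $x_{n+1}$ then necessarily $x_{n+1}\nmid u_i$ and the factor from $K[Z_i]$ is a monomial in $K[Z_i\setminus\{x_{n+1}\}]$. Since the lemma hypothesis $I'\cap S\ne(0)$ guarantees the decomposition above is nonempty, this yields a genuine Stanley decomposition.

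Finally, I would bound the Stanley depth: for each surviving index $i$, $\lvert Z_i\setminus\{x_{n+1}\}\rvert\geq \lvert Z_i\rvert-1\geq \sdepth_{S'}(I')-1$, so
$$\sdepth_S(I'\cap S)\ \geq\ \sdepth_{S'}(I')-1,$$
which is the claim. There is essentially no analytic obstacle here; the only point that needs care, and hence the place I would write out in full, is verifying that the restricted decomposition is still a direct sum exhausting $I'\cap S$, which amounts to the observation that support of a monomial is preserved under direct-sum decompositions of multigraded vector spaces.
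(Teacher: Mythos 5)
Your proposal is correct and follows essentially the same route as the paper: restrict the optimal Stanley decomposition of $I'$ to the summands with $x_{n+1}\nmid u_i$, delete $x_{n+1}$ from each $Z_i$, and observe that the resulting decomposition of $I'\cap S$ loses at most one from each $|Z_i|$. The paper's proof is exactly this argument, including the same verification that the restricted sum is direct and exhausts $I'\cap S$.
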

\begin{proof}
Let $\mathcal{D}:I'=\bigoplus_{i=1}^r u_iK[Z_i]$ be a Stanley decomposition of $I'$ such that $\sdepth(I')=\sdepth \mathcal{D}$. We claim that $$I'\cap S=\bigoplus_{x_{n+1}\notin \supp(u_i)}u_iK[Z_i\backslash \{x_{n+1}\}], \ \hbox{which is enough}.$$ Let $w \in I'\cap S$ be a monomial. Then there exists $i$ such that $w\in u_iK[Z_i]$ and so $x_{n+1}\nmid u_i$, $w\in u_iK[Z_i \backslash \{x_{n+1}\}]$. Thus $"\subset"$ holds, the other inclusion being trivial. As $u_iK[Z_i\backslash \{x_{n+1}\}]\cap u_jK[Z_j\backslash \{x_{n+1}\}]\subset u_iK[Z_i]\cap u_jK[Z_j]=\emptyset$ for $i\neq j$, we are done.
\end{proof}
A part of the inequality in \cite[Lemma 2.11]{MI} follows from the above lemma.
\begin{Corollary}\label{10}
Let $I\subset S$ be a monomial ideal of $S$ and $I'=(I,x_{n+1})\subset S'=S[x_{n+1}]$. Then $\sdepth_{S'}(I')\leq \sdepth_S(I)+1.$
\end{Corollary}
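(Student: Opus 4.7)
The plan is to apply Lemma \ref{9} directly, after identifying the intersection $I'\cap S$.

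First I would verify the equality $I'\cap S=I$. Since $I'=(I,x_{n+1})$ is a monomial ideal of $S'$, a monomial $u\in S'$ belongs to $I'$ precisely when it is divisible either by one of the minimal monomial generators of $I$ (each of which lies in $S$) or by $x_{n+1}$. If in addition $u\in S$, then $x_{n+1}\nmid u$, so $u$ must be divisible by a generator of $I$, and therefore $u\in I$. The reverse inclusion $I\subset I'\cap S$ is immediate, so $I'\cap S=I$.

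Assuming $I\neq 0$ (the case $I=0$ is trivial, since then $I'=(x_{n+1})$ is principal and the inequality is clear), we have $I'\cap S=I\neq 0$, so Lemma \ref{9} applies and gives
\[
\sdepth_S(I)=\sdepth_S(I'\cap S)\ge \sdepth_{S'}(I')-1,
\]
which rearranges to $\sdepth_{S'}(I')\le \sdepth_S(I)+1$, as required.

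The only content beyond the preceding lemma is the monomial-ideal identification $I'\cap S=I$, which is essentially tautological for monomial ideals; I expect no real obstacle. The entire force of the statement is carried by Lemma \ref{9}, and this corollary merely specializes it to the case where the extra variable has been adjoined as a generator.
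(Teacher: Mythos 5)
Your proof is correct and is exactly the argument the paper intends: the corollary is stated as an immediate consequence of Lemma \ref{9}, via the observation that $I'\cap S=I$ (which, as you note, is a tautology for monomial ideals since a monomial of $S$ in $(I,x_{n+1})$ cannot be divisible by $x_{n+1}$). Your extra remark on the degenerate case $I=0$ is harmless and does not change the substance.
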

\begin{Corollary}\label{11}
Let $I'={P}_1\cap {P}_2\cap {P}_3\subset S'=S[x_{n+1}]$ where ${P}_i's$ are prime monomial ideals with ${{P}_1}=(x_1,\dots,\dots,x_t)$, ${{P}_2}=(x_{u+1},\dots,x_{n})$ and  ${{P}_3}=(x_{t+1},\dots,x_n,x_{n+1},\\x_1,\dots,x_u)$. Then
 $$\sdepth_{S'}(I')\leq \sdepth({I})+1,$$ where ${I}=I'\cap S=(x_1,\dots,\dots,x_t)\cap (x_{u+1},\dots,x_{n})\cap (x_{t+1},\dots,x_n,x_1,\dots,x_u).$
\end{Corollary}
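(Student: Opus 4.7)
The plan is to view this corollary as an immediate application of Lemma \ref{9}, so the only substantive step is identifying $I'\cap S$ with the ideal $I$ displayed in the statement.

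First, I would recall the standard monomial-ideal fact that for any monomial ideals $J_1,J_2,J_3 \subset S' = S[x_{n+1}]$ one has
\[(J_1\cap J_2\cap J_3)\cap S = (J_1\cap S)\cap (J_2\cap S)\cap (J_3\cap S),\]
since both sides consist of exactly the monomials simultaneously in all three $J_i$ that are not divisible by $x_{n+1}$. Applying this to $J_i = P_i$ and noting that $P_1,P_2$ already lie in $S$, while the monomial generators of $P_3$ not involving $x_{n+1}$ are precisely $x_{t+1},\dots,x_n,x_1,\dots,x_u$, I obtain
\[I'\cap S = (x_1,\dots,x_t)\cap (x_{u+1},\dots,x_n)\cap (x_{t+1},\dots,x_n,x_1,\dots,x_u) = I.\]

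Next I would observe that $I\neq (0)$; indeed the product of any three nonzero monomials, one from each factor, lies in the intersection. Hence Lemma \ref{9} applies and gives $\sdepth_S(I)=\sdepth_S(I'\cap S)\geq \sdepth_{S'}(I')-1$, which after rearrangement is exactly the claimed bound $\sdepth_{S'}(I')\leq \sdepth(I)+1$.

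I do not anticipate any real obstacle: the argument is pure bookkeeping once Lemma \ref{9} is in place. The only points to double-check are that the intersection formula and the explicit computation of $P_3\cap S$ both rely on $I'$ being a monomial ideal, which is automatic here, and that the three-way intersection is genuinely nonzero so that Lemma \ref{9} can be invoked.
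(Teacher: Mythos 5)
Your proof is correct and follows essentially the same route as the paper, which presents this statement as an immediate consequence of Lemma \ref{9}: the only content is the identification $I'\cap S=I$, which you verify correctly, together with the observation that this intersection is nonzero so that the lemma applies.
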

We recall the method of Herzog et al. \cite{HVZ} for computing the Stanley depth of a squarefree monomial ideal $I$ using posets. Let $G(I)=\{v_1,\dots,v_m\}$ be the set of minimal monomial generators of $I$. The characteristic poset of $I$ with respect to $h=(1,\dots,1)$  (see \cite{HVZ}), denoted by $\mathcal{P}_I^{h}$ is in fact the set $$\mathcal{P}_I^{h}=\{C\subset [n] \ | \ C  \ \text{contains supp($v_i$) for some $i$}\} \ ,$$ where  $\supp(v_i)=\{j:x_j|v_i\}\subseteq[n]:=\{1,\dots,n\}$. For every $A,B\in \mathcal{P}_I^{h}$ with $A\subseteq B$, define the interval $[A,B]$ to be $\{C\in \mathcal{P}_I^{h}:A\subseteq B\subseteq C\}$. Let $\mathcal{P}:\mathcal{P}_I^{h}=\cup_{i=1}^r[C_i,D_i]$ be a partition of $\mathcal{P}_I^{h}$, and for each $i$, let $c(i)\in \{0,1\}^n$ be the n-uple such that $\supp(x^{c(i)})=C_i$. Then there is a Stanley decomposition $\mathcal{D}(\mathcal{P})$ of $I$ $$\mathcal{D}(\mathcal{P}):I=\bigoplus_{i=1}^sx^{c{(i)}}K[\{x_k|k\in D_i\}].$$ By \cite{HVZ} we get that $$\sdepth(I)=\max\{\sdepth\mathcal{D}(P) \ | \ \mathcal{P} \text { is a partition of} \ \mathcal{P}_I^{h}\}.$$
\indent Now we consider a special type of ideals which belongs to the case (1). Let
 ${P_1}=(x_1,\dots,x_t)$, ${P_2}=(x_{u+1},\dots,x_n)$ and ${P_3}=(x_{t+1},\dots,x_n,x_1\dots,x_u)$ ba prime ideals of $S$, where $0<u<t$ and $I=P_1\cap P_2\cap P_3$. Then
\begin{Lemma}\label{8}
$$\sdepth(I)\leq 2+\frac{\Big(
                                                                  \begin{array}{c}
                                                                    n \\
                                                                    3 \\
                                                                  \end{array}
                                                                 \Big)-\Big(
                                                                                \begin{array}{c}
                                                                                  u \\
                                                                                  3 \\
                                                                                \end{array}
                                                                              \Big)-\Big(
                                                                                      \begin{array}{c}
                                                                                        t-u \\
                                                                                        3 \\
                                                                                      \end{array}
                                                                                    \Big)-\Big(
                                                                                            \begin{array}{c}
                                                                                              n-t \\
                                                                                              3 \\
                                                                                            \end{array}
                                                                                          \Big)}{\Big(
                                                                          \begin{array}{c}
                                                                            n \\
                                                                            2 \\
                                                                          \end{array}
                                                                        \Big)-\Big(
                                                                                \begin{array}{c}
                                                                                  u \\
                                                                                  2 \\
                                                                                \end{array}
                                                                              \Big)-\Big(
                                                                                      \begin{array}{c}
                                                                                        t-u \\
                                                                                        2 \\
                                                                                      \end{array}
                                                                                    \Big)-\Big(
                                                                                            \begin{array}{c}
                                                                                              n-t \\
                                                                                              2 \\
                                                                                            \end{array}
                                                                                          \Big)
                                                                 },
$$ where $\Big(
          \begin{array}{c}
            a \\
            b \\
          \end{array}
        \Big)=0$ when $a<b$.
\end{Lemma}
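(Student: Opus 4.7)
The plan is to apply the Herzog--Vladoiu--Zheng characteristic-poset method that has just been recalled. First I would partition $[n]$ into three classes according to which two of the three primes each variable lies in: $A=\{1,\dots,u\}\subset P_1\cap P_3$, $B=\{u+1,\dots,t\}\subset P_1\cap P_2$, $C=\{t+1,\dots,n\}\subset P_2\cap P_3$, so $|A|=u$, $|B|=t-u$, $|C|=n-t$. A squarefree monomial $\prod_{i\in S}x_i$ lies in $I$ iff $S$ meets each of $A\cup B$, $B\cup C$ and $A\cup C$, equivalently iff $S$ is not contained in a single class. Hence $G(I)$ consists precisely of the products $x_ix_j$ with $i,j$ from two distinct classes; all minimal generators have degree $2$, and their number is $N_2:=\binom{n}{2}-\binom{u}{2}-\binom{t-u}{2}-\binom{n-t}{2}$.

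Next I would look at $\mathcal{P}_I^h$ at its two lowest levels. A set $S\subseteq[n]$ lies in $\mathcal{P}_I^h$ iff it contains a valid 2-subset, i.e.\ iff $|S|\geq 2$ and $S$ is not entirely inside one class. Counting by subtraction gives $N_2$ elements of size $2$ and $N_3:=\binom{n}{3}-\binom{u}{3}-\binom{t-u}{3}-\binom{n-t}{3}$ elements of size $3$, matching exactly the denominator and numerator of the claimed bound; this is the structural hint that the HVZ framework is the right one.

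The core estimate is then a simple double count. Fix a partition $\mathcal{P}:\mathcal{P}_I^h=\bigcup_{i=1}^{r}[C_i,D_i]$ realising $\sdepth\mathcal{D}(\mathcal{P})=\sdepth(I)=:d$, so $|D_i|\geq d$ for every $i$. No singleton lies in $\mathcal{P}_I^h$, so each $C_i$ has size at least $2$; and any 2-element set $F\in\mathcal{P}_I^h$ must satisfy $C_i\subseteq F$ for some $i$, forcing $C_i=F$. This puts the intervals with $|C_i|=2$ in bijection with the $N_2$ valid 2-subsets. For each such interval the 3-subsets it contains are exactly $C_i\cup\{k\}$ with $k\in D_i\setminus C_i$, of which there are $|D_i|-2\geq d-2$, and all of these automatically lie in $\mathcal{P}_I^h$ because they already contain the valid pair $C_i$. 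Since intervals are pairwise disjoint,
$$N_3\;\geq\;\sum_{|C_i|=2}(|D_i|-2)\;\geq\;N_2(d-2),$$
and rearranging yields $d\leq 2+N_3/N_2$, which is the stated inequality.

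I do not expect a genuine obstacle: the only slightly delicate point is the combinatorial identification of $G(I)$ and of the two lowest levels of $\mathcal{P}_I^h$, after which the HVZ partition constraint does all the work. In particular intervals with $|C_i|\geq 3$ need not be analysed, because ignoring them only strengthens the inequality above; and the non-emptiness $u,t-u,n-t\geq 1$ coming from $0<u<t<n$ guarantees $N_2>0$, so the fraction $N_3/N_2$ makes sense.
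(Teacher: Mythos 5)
Your proposal is correct and follows essentially the same route as the paper: both identify that $I$ is generated in degree $2$, take an optimal HVZ partition of the characteristic poset, observe that each of the $N_2$ valid $2$-subsets is the bottom of an interval contributing at least $\sdepth(I)-2$ distinct $3$-subsets, and conclude by the double count $N_2(\sdepth(I)-2)\leq N_3$. Your write-up merely makes explicit the class decomposition of the variables and the identification of the two lowest poset levels, which the paper leaves implicit by referring to the proof of \cite[Theorem 2.8]{MI}.
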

\begin{proof}
We follow the proof of \cite[Theorem 2.8]{MI}(see also \cite{KSSY}). Note that ${I}$ is generated by monomials of degree 2. Let $k:=\sdepth({I})$. The poset $P_{{I}}$ has a partition $\mathcal{P}$ : $P_{{I}}= \bigcup_{i=1}^{s}$ $[C_i,D_i]$ , satisfying $\sdepth(\mathcal{D}(\mathcal{P}))=k$ where $\mathcal{D(\mathcal{P})}$ is the Stanley decomposition of ${I}$ with respect to the partition $\mathcal{P}$. For each interval $[C_i,D_i]$ in $\mathcal{P}$ with $|C_i|=2$ we have $|D_i|\geq k$. Also there are $|D_i|-|C_i|$ subsets of cardinality $3$ in this interval. Since these intervals are disjoint , counting the number of subsets of cardinality 2 and 3 we have\\
\begin{multline*}
$$\Big[\Big(
                                                                          \begin{array}{c}
                                                                            n \\
                                                                            2 \\
                                                                          \end{array}
                                                                        \Big)-\Big(
                                                                                \begin{array}{c}
                                                                                  u \\
                                                                                  2 \\
                                                                                \end{array}
                                                                              \Big)-\Big(
                                                                                      \begin{array}{c}
                                                                                        t-u \\
                                                                                        2 \\
                                                                                      \end{array}
                                                                                    \Big)-\Big(
                                                                                            \begin{array}{c}
                                                                                              n-t \\
                                                                                              2 \\
                                                                                            \end{array}
\Big)\Big](k-2)\leq \\\Big( \begin{array}{c}
                                                                    n \\
                                                                    3 \\
                                                                  \end{array}
                                                                 \Big)-\Big(
                                                                                \begin{array}{c}
                                                                                  u \\
                                                                                  3 \\
                                                                                \end{array}
                                                                              \Big)-\Big(
                                                                                      \begin{array}{c}
                                                                                        t-u \\
                                                                                        3 \\
                                                                                      \end{array}
                                                                                    \Big)-\Big(
                                                                                            \begin{array}{c}
                                                                                              n-t \\
                                                                                              3 \\
                                                                                            \end{array}
                                                                                          \Big),$$
\end{multline*}
which is enough.
\end{proof}
\begin{Example}\label{315}
{\em Let $I=(x_1,x_2,x_3,x_4)\cap(x_3,x_4,x_5,x_6)\cap(x_5,x_6,x_1,x_2)$ then $u=2$, $t=4$ and by the above lemma we have
$\sdepth(I)\leq 3$.
}
\end{Example}
\begin{Proposition}\label{12}
Let $I=P_1\cap P_2 \cap P_3$, where ${P_1}=(x_1,\dots,x_t)$, ${P_2}=(x_{s+1},\dots,x_{r})$ and  ${P_3}=(x_{q+1},\dots,x_n,x_1,\dots,x_u)$ with $0<u\leq s\leq t\leq q\leq r\leq n$. Let $d=s-u+q-t+n-r$, $n-d\geq 3$. Then $$\sdepth(I)\leq 2+d+\frac{\Big(
                                                                  \begin{array}{c}
                                                                    n-d \\
                                                                    3 \\
                                                                  \end{array}
                                                                 \Big)-\Big(
                                                                                \begin{array}{c}
                                                                                  u \\
                                                                                  3 \\
                                                                                \end{array}
                                                                              \Big)-\Big(
                                                                                      \begin{array}{c}
                                                                                        t-s \\
                                                                                        3 \\
                                                                                      \end{array}
                                                                                    \Big)-\Big(
                                                                                            \begin{array}{c}
                                                                                              r-q \\
                                                                                              3 \\
                                                                                            \end{array}
                                                                                          \Big)}{\Big(
                                                                          \begin{array}{c}
                                                                            n-d \\
                                                                            2 \\
                                                                          \end{array}
                                                                        \Big)-\Big(
                                                                                \begin{array}{c}
                                                                                  u \\
                                                                                  2 \\
                                                                                \end{array}
                                                                              \Big)-\Big(
                                                                                      \begin{array}{c}
                                                                                        t-s \\
                                                                                        2 \\
                                                                                      \end{array}
                                                                                    \Big)-\Big(
                                                                                            \begin{array}{c}
                                                                                              r-q \\
                                                                                              2 \\
                                                                                            \end{array}
                                                                                          \Big)
                                                                 }.
$$
\end{Proposition}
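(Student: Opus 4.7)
The plan is to reduce Proposition \ref{12} to Lemma \ref{8} by stripping off, one at a time, the variables that appear in only one of the three primes. Among the $n$ variables of $S$, exactly $d=(s-u)+(q-t)+(n-r)$ are ``exclusive'': $x_{u+1},\dots,x_s$ belong only to $P_1$, $x_{t+1},\dots,x_q$ only to $P_2$, and $x_{r+1},\dots,x_n$ only to $P_3$. The remaining $n-d$ variables sit in a pairwise intersection: $A=\{x_1,\dots,x_u\}\subset P_1\cap P_3$, $C=\{x_{s+1},\dots,x_t\}\subset P_1\cap P_2$, and $E=\{x_{q+1},\dots,x_r\}\subset P_2\cap P_3$, with $|A|=u$, $|C|=t-s$, $|E|=r-q$ and $|A|+|C|+|E|=n-d$. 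Set $S_0=K[A\cup C\cup E]$ and the ``core'' ideal $I_0=(A\cup C)S_0\cap (C\cup E)S_0\cap (A\cup E)S_0$. After relabelling, $I_0$ is exactly of the shape covered by Lemma \ref{8} with the triple $(u,t,n)$ replaced by $(u,\,u+(t-s),\,n-d)$, so that lemma yields
$$\sdepth_{S_0}(I_0)\leq 2+\frac{\binom{n-d}{3}-\binom{u}{3}-\binom{t-s}{3}-\binom{r-q}{3}}{\binom{n-d}{2}-\binom{u}{2}-\binom{t-s}{2}-\binom{r-q}{2}}.$$

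Next I reintroduce the exclusive variables one at a time. Order them as $y_1,\dots,y_d$ and build a chain $S_0\subset S_1\subset\cdots\subset S_d=S$ with $S_j=S_{j-1}[y_j]$, together with ideals $I_j=P_1^{(j)}\cap P_2^{(j)}\cap P_3^{(j)}$, where the unique $P_a^{(j-1)}$ that originally contains $y_j$ is enlarged to $P_a^{(j-1)}+(y_j)$ and the other two primes are simply extended to $S_j$. The essential check is that $I_j\cap S_{j-1}=I_{j-1}$: a monomial of $I_j$ free of $y_j$ automatically lies in each of the two untouched primes (whose generators live in $S_{j-1}$) and also in the enlarged prime, because it cannot be in $(y_j)$. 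Hence Lemma \ref{9} gives $\sdepth_{S_j}(I_j)\leq \sdepth_{S_{j-1}}(I_{j-1})+1$, and iterating $d$ times yields $\sdepth_S(I)\leq \sdepth_{S_0}(I_0)+d$. Combined with the bound for $\sdepth_{S_0}(I_0)$ above, this is exactly the inequality of the proposition.

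The main obstacle is verifying the intersection identity $I_j\cap S_{j-1}=I_{j-1}$ at every step; it is routine but depends crucially on $y_j$ being a generator of only one of the three primes (otherwise deleting $y_j$ from a monomial could destroy membership in a second prime). A minor subtlety is the degenerate cases $s=t$ or $q=r$, in which $|C|=0$ or $|E|=0$; Lemma \ref{8} is not stated verbatim for these, but the counting argument in its proof still applies on $I_0$ with the vanishing binomial terms read as zero, and produces the same estimate. The hypothesis $n-d\geq 3$ in the statement is precisely what ensures that $\mathcal{P}_{I_0}^h$ has subsets of cardinality $3$, so that the underlying pair/triple counting is meaningful.
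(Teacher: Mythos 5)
Your proof is correct and follows essentially the same route as the paper: the paper applies Corollary \ref{11} (itself a consequence of Lemma \ref{9}) by recurrence on the same $d$ exclusive indeterminates $\{x_{u+1},\dots,x_s,x_{t+1},\dots,x_q,x_{r+1},\dots,x_n\}$ to get $\sdepth_S(I)\leq\sdepth_{S'}(I')+d$ for exactly your core ideal $I_0=I'$, and then invokes Lemma \ref{8}. Your explicit verification of $I_j\cap S_{j-1}=I_{j-1}$ and your remark about the degenerate cases $s=t$ or $q=r$ are details the paper leaves implicit, but the argument is the same.
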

\begin{proof}
Applying Corollary \ref{11} by recurrence on $d$-indeterminates  $\{x_{u+1},\dots,x_s,x_{t+1},\\\dots,x_{q},x_{r+1},\dots,x_n\}$ we get $\sdepth_S(I)\leq \sdepth_{S'}(I')+d$, where $$I'=(x_1,\dots,x_u,x_{s+1},\dots,x_t)\cap (x_{s+1},\dots,x_t,x_{q+1},\dots,x_r)\cap (x_{q+1},\dots,x_r,x_1,\dots,x_u)$$ and $S'=K[x_1,\dots,x_u,x_{s+1},\dots,x_t,x_{q+1},\dots,x_r]$. Now it is enough to apply Lemma \ref{8} to $I'$.

\end{proof}
Next we consider Case(2):
\begin{Theorem}\label{2}
Let $I=P_1\cap P_2\cap P_3$ where $P_1,P_2$ and $P_3$ are prime monomial ideals of $S$ such that $G({P_i})\cap G({P_j})=\emptyset$ for all $i\neq j $, $P_1+P_2+P_3=\mathfrak{m}$ and $\height(P_i)=d_i$. Then
\begin{multline*}
$$\sdepth(I)\leq 3+\frac{1}{d_1d_2d_3}
\Big[ \Big(
        \begin{array}{c}
          n \\
          4 \\
        \end{array}
      \Big)
-\sum\limits_{i=1}^3\Big(
        \begin{array}{c}
          d_i \\
          4 \\
        \end{array}
      \Big)
        - \sum\limits_{i=1}^3\Big(
                     \begin{array}{c}
                       d_i \\
                       3 \\
                     \end{array}
                   \Big)
(n-d_i)
 -\sum\limits_{i<j}\Big(
                \begin{array}{c}
                  d_i \\
                  2 \\
                \end{array}
              \Big)\Big(
                     \begin{array}{c}
                       d_j \\
                       2 \\
                     \end{array}
                   \Big)
\Big].
$$
\end{multline*}
\end{Theorem}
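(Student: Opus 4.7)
The plan is to imitate the counting argument used in Lemma \ref{8}, adapted to the case when the minimal generators of $I$ have degree three instead of two. After the standard renumbering, we may assume $P_1=(x_1,\dots,x_{d_1})$, $P_2=(x_{d_1+1},\dots,x_{d_1+d_2})$, $P_3=(x_{d_1+d_2+1},\dots,x_n)$ with $n=d_1+d_2+d_3$. Since the sets $G(P_i)$ are disjoint, $I$ is a squarefree monomial ideal whose minimal monomial generators are exactly the $d_1d_2d_3$ monomials $x_ix_jx_k$ with $i,j,k$ chosen one from each block. In particular every minimal generator has degree $3$, so the characteristic poset $\mathcal{P}_I^h$ consists of those subsets $C\subseteq [n]$ that meet all three blocks.

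Next I would invoke the Herzog--Vladoiu--Zheng method recalled earlier in the section. Set $k=\sdepth(I)$ and fix a partition $\mathcal{P}_I^h=\bigcup_i[C_i,D_i]$ with $\sdepth\mathcal{D}(\mathcal{P})=k$, so that $|D_i|\geq k$ for all $i$. Because the $3$-subsets of $[n]$ lying in $\mathcal{P}_I^h$ are precisely the minimal elements of the poset, each of them must appear as some $C_i$. There are exactly $d_1d_2d_3$ such intervals, and each interval with $|C_i|=3$ contains exactly $|D_i|-3\geq k-3$ subsets of cardinality $4$. Since the intervals are disjoint, summing gives
$$d_1d_2d_3\,(k-3)\;\leq\; \#\{C\in\mathcal{P}_I^h:|C|=4\}.$$

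The main computation is then to evaluate $\#\{C\in\mathcal{P}_I^h:|C|=4\}$. I would do this by classifying $4$-subsets according to their intersection pattern $(a_1,a_2,a_3)$ with the three blocks, with $a_1+a_2+a_3=4$. The identity
$$\binom{n}{4}=\sum_{a_1+a_2+a_3=4}\binom{d_1}{a_1}\binom{d_2}{a_2}\binom{d_3}{a_3}$$
splits the right-hand side into the patterns that omit at least one block, namely $(4,0,0)$-type, $(3,1,0)$-type, $(2,2,0)$-type, contributing $\sum_i\binom{d_i}{4}$, $\sum_i\binom{d_i}{3}(n-d_i)$, and $\sum_{i<j}\binom{d_i}{2}\binom{d_j}{2}$ respectively, plus the patterns $(2,1,1),(1,2,1),(1,1,2)$ which are exactly the $4$-sets meeting all three blocks, i.e.\ the $4$-subsets of $\mathcal{P}_I^h$. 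Rearranging gives
$$\#\{C\in\mathcal{P}_I^h:|C|=4\}=\binom{n}{4}-\sum_i\binom{d_i}{4}-\sum_i\binom{d_i}{3}(n-d_i)-\sum_{i<j}\binom{d_i}{2}\binom{d_j}{2}.$$

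Substituting this into the displayed inequality and dividing by $d_1d_2d_3$ produces the announced upper bound. The only conceptual step that requires care is the justification that every minimal poset element $C_i$ of cardinality $3$ must be the left endpoint of some interval in the partition; but this is immediate because no element of $\mathcal{P}_I^h$ is strictly smaller. The combinatorial identity above is essentially routine Vandermonde-type accounting and should not cause trouble, so I expect the whole argument to run cleanly in a few lines.
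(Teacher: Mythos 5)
Your argument is correct and is essentially the paper's own proof: the author likewise applies the Herzog--Vladoiu--Zheng poset method, notes there are $d_1d_2d_3$ intervals with $|C_i|=3$, each containing $|D_i|-3\geq k-3$ four-element subsets, and derives $d_1d_2d_3(k-3)\leq\#\{C\in\mathcal{P}_I^h:|C|=4\}$. Your only addition is the explicit inclusion--exclusion count of the $4$-subsets meeting all three blocks, which the paper leaves implicit.
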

\begin{proof}
Note that $I$ is a squarefree monomial ideal generated in monomials of degree 3. Let $k:=\sdepth(I)$. The poset $P_{I}$ has a partition $\mathcal{P}$ : $P_{I}= \bigcup_{i=1}^{s}$ $[C_i,D_i]$, satisfying $\sdepth(\mathcal{D}(\mathcal{P}))=k$ where $\mathcal{D(P)}$ is the Stanley decomposition of $I$ with respect to the partition $\mathcal{P}$. For each interval $[C_i,D_i]$ in $\mathcal{P}$ with $|C_i|=3$ we have $|D_i|\geq k$ and note that there are $d_1d_2d_3$ such intervals. There are $|D_i|-|C_i|$ subsets of cardinality $4$ in this interval. Since these intervals are disjoint, counting the number of subsets of cardinality 4 we have
$$(d_1d_2d_3)(k-3)\leq \Big[ \Big(
        \begin{array}{c}
          n \\
          4 \\
        \end{array}
      \Big)
-\sum\limits_{i=1}^3\Big(
        \begin{array}{c}
          d_i \\
          4 \\
        \end{array}
      \Big)
        - \sum\limits_{i=1}^3\Big(
                     \begin{array}{c}
                       d_i \\
                       3 \\
                     \end{array}
                   \Big)
(n-d_i)
 -\sum\limits_{i<j}\Big(
                \begin{array}{c}
                  d_i \\
                  2 \\
                \end{array}
              \Big)\Big(
                     \begin{array}{c}
                       d_j \\
                       2 \\
                     \end{array}
                   \Big)
\Big], $$
which is enough.
\end{proof}
\begin{Example}\label{310}
{\em Let $I=(x_1^{a_1},\dots,x_5^{a_5})\cap (x_6^{a_6},\dots,x_{10}^{a_{10}})\cap (x_{11}^{a_{11}},\dots,x_{15}^{a_{15}})$ for some positive integers $a_i$. Then by Corollary \ref{1a} we have $\sdepth(I)\geq \lceil\frac{5}{2}\rceil+\lceil\frac{5}{2}\rceil+\lceil\frac{5}{2}\rceil=9$. Now by applying \cite[Corollary 2.2]{MI} and using Theorem \ref{2} we have $\sdepth(I)\leq 9$ which means that $\sdepth(I)=9$.
}
\end{Example}

Now we consider Case(3):
\begin{Proposition}\label{5}
Let ${P_1}=(x_1,\dots,x_t)$, ${P_2}=(x_{s+1},\dots,x_r)$, ${P_3}=(x_{q+1},\dots,x_n)$ with $s\leq t$, $q\leq r$ and $I=P_1\cap P_2\cap P_3$. Let
$$d:=\min \{\frac{2n+t-r-s+2}{2},\frac{n+r+s-q+2}{2}, \ n-\lfloor\frac{t}{2}\rfloor, \ n-\lfloor\frac{r-s}{2}\rfloor, \  n-\lfloor\frac{n-q}{2}\rfloor\}.$$ Then
$\sdepth(I)\leq d, \ if \ t\geq q$ and
$$\sdepth(I)\leq \min\{d, \ \frac{n+q-t+2}{2}\}, \ \ \ \ \ \ \ \ \ \ \ \ \ if \ \ \ t<q.$$
\end{Proposition}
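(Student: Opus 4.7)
The plan is to establish each of the upper bounds entering the minimum $d$ separately. The three bounds $n-\lfloor t/2\rfloor$, $n-\lfloor(r-s)/2\rfloor$ and $n-\lfloor(n-q)/2\rfloor$ follow immediately from Theorem~\ref{f} applied to the three minimal primes $P_1,P_2,P_3\in\Ass(S/I)$, combined with the known identity $\sdepth_S(P)=n-\lfloor|A|/2\rfloor$ for a monomial prime $P=(x_a:a\in A)$ (a consequence of \cite[Theorem 2.3]{O} together with the matching upper bound for the maximal ideal of $K[A]$, lifted to $S$ via \cite[Lemma 3.6]{HVZ}). I write $A_k:=\{a:x_a\in P_k\}$ throughout.

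For the remaining bounds I would use the colon inequality $\sdepth(I)\le\sdepth(I:w)$ of \cite[Proposition~1.3]{D2}. For $\{i,j,k\}=\{1,2,3\}$, any monomial $w$ supported in $A_k\setminus(A_i\cup A_j)$ satisfies $I:w=P_i\cap P_j$. From the explicit form of the $A_k$ one computes $A_3\setminus(A_1\cup A_2)=\{r+1,\dots,n\}$ and $A_1\setminus(A_2\cup A_3)=\{1,\dots,\min(s,q)\}$, both non--empty by the minimality of the $P_\ell$, while $A_2\setminus(A_1\cup A_3)=\{t+1,\dots,q\}$ is non--empty precisely when $t<q$, which is why the extra bound coming from $P_1\cap P_3$ appears only in that sub--case. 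The proof thus reduces to the \emph{key inequality}
\[
\sdepth_S(P_i\cap P_j)\le n+1-\frac{|A_i\triangle A_j|}{2},
\]
which, once combined with $|A_1\triangle A_2|=r+s-t$, $|A_2\triangle A_3|=n+q-r-s$ and $|A_1\triangle A_3|=n+t-q$, becomes exactly the three fractions appearing in the statement.

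To prove the key inequality I would iterate Lemma~\ref{9} on the $|A_i\cap A_j|$ variables in $A_i\cap A_j$, obtaining
\[
\sdepth_S(P_i\cap P_j)\le\sdepth_{K[Y]}\bigl((P_i\cap P_j)\cap K[Y]\bigr)+|A_i\cap A_j|
\]
with $Y=\{x_a:a\notin A_i\cap A_j\}$, and identify $(P_i\cap P_j)\cap K[Y]$ with the product $P_i'\cdot P_j'$ of two primes $P_i'=(x_a:a\in A_i\setminus A_j)$ and $P_j'=(x_a:a\in A_j\setminus A_i)$ of $K[Y]$ with \emph{disjoint} variable supports, generated purely in degree $2$. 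The Herzog--Vladoiu--Zheng characteristic poset of $P_i'P_j'$ has no $1$--element members, and each of its $|A_i\setminus A_j|\cdot|A_j\setminus A_i|$ two--element members is forced to serve as an interval base in every admissible partition. Counting $3$--element members of the poset by inclusion--exclusion on the three disjoint colour classes of $Y$, and comparing with what these forced $2$--bases must cover---exactly as in the proof of Lemma~\ref{8}---gives the bound $\sdepth_{K[Y]}(P_i'P_j')\le|Y|+1-|A_i\triangle A_j|/2$, which lifts through the previous displayed inequality to the key inequality. The main obstacle is this counting step, where the clean fraction ``$+1-|A_i\triangle A_j|/2$'' only emerges after carefully matching the combinatorics of the forced $2$--bases against the $3$--subsets of $Y$ they are allowed to cover.
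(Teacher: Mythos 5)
Your proposal is correct and follows essentially the same route as the paper: the paper also colons $I$ by $x_n$, $x_1$ and (when $t<q$) $x_{t+1}$ --- exactly the variables in your sets $A_k\setminus(A_i\cup A_j)$ --- to reduce via \cite[Proposition 1.3]{D2} to intersections of two primes, and then invokes Theorem \ref{f} together with the two-prime bounds of \cite[Proposition 2.13]{MI} and \cite[Theorem 2.8]{MI} (transported by \cite[Lemma 3.6]{HVZ}), which are precisely your key inequality $\sdepth_S(P_i\cap P_j)\le n+1-|A_i\triangle A_j|/2$. The only difference is that you re-derive that cited two-prime bound from scratch via Lemma \ref{9} and the Herzog--Vladoiu--Zheng poset count (in the same spirit as the paper's Lemma \ref{8}), and your count does check out: the forced $2$-element bases number $\alpha\beta$ and the admissible $3$-subsets number $\alpha\beta c+\binom{\alpha}{2}\beta+\alpha\binom{\beta}{2}$, giving exactly $1+c+\frac{\alpha+\beta}{2}=|Y|+1-\frac{\alpha+\beta}{2}$.
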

\begin{proof} Let $v=x_n$ we see that $v\notin {I}$ and we have $I':={I}:v=(x_1,\dots,x_t)\cap (x_{s+1},\dots,x_r)$. Then by Proposition \cite[Proposition 1.3]{D2} we have $\sdepth_S({I})\leq \sdepth_S(I')$. We have $\sdepth_{S}({I'})=\sdepth(I'\cap K[x_1,\dots,x_r])+n-r$ by \cite[Lemma 3.6]{HVZ}. But by \cite[Proposition 2.13]{MI} and Theorem \ref{f} we have $$\sdepth(I'\cap K[x_1,\dots,x_r])\leq \min \{\frac{r+t-s+2}{2},r-\lfloor\frac{t}{2}\rfloor,r-\lfloor\frac{r-s}{2}\rfloor\}.$$ It follows   $$\sdepth_S(I)\leq\sdepth_S({I'})\leq \min \{\frac{2n+t-r-s+2}{2}, \ n-\lfloor\frac{t}{2}\rfloor, \ n-\lfloor\frac{r-s}{2}\rfloor\}.$$ Similarly if we take $v=x_1$ then we have $$\sdepth_S({I})\leq \min \{\frac{n+r+s-q+2}{2}, \ n-\lfloor\frac{r-s}{2}\rfloor, \ n-\lfloor\frac{n-q}{2}\rfloor\}.$$ and so $\sdepth_S(I)\leq d$.
If $t<q$ then take $v=x_{t+1}$. We have $I'':={I}:v=(x_1,\dots,x_t)\cap (x_{q+1},\dots,x_{n})$ and by \cite[Theorem 2.8]{MI} it follows $$\sdepth(I''\cap K[x_1,\dots,x_t,x_{q+1},\dots,x_n])\leq \frac{n-(q-t)+2}{2}$$ By \cite[Lemma 3.6]{HVZ} we get $$\sdepth_S({I})\leq \frac{n-(q-t)+2}{2}+q-t=\frac{n+q-t+2}{2}.$$
\end{proof}
\begin{Example}\label{317}
{\em Let ${I}=(x_1,\dots x_4)\cap (x_5,\dots,x_8)\cap (x_7,\dots,x_{10})$, that is $n=10$, $t=s=4$, $r=8$, $q=6$. Then $d=\min\{7,9,8,8,8\}=7$ and $\sdepth(I)\leq 7$ by the above proposition.
}
\end{Example}
One can extend Theorem \ref{2} for an intersection of arbitrary number of prime ideals. In this case the expression for the Stanley depth is more complicated. When there are four such prime ideals in the intersection we give an upper bound which is reasonable in some cases.\\
\indent Let $I=P_1\cap P_2\cap P_3\cap P_4$, where $P_1,P_2,P_3,P_4$ are prime ideals such that $G({P_i})\cap G({P_j})=\emptyset$ for all $i\neq j$ and $\height(P_i)=d_i$ where $d_1+d_2+d_3+d_4=n$. After renumbering the variables we assume that $d_1\geq d_2\geq d_3\geq d_4=d$ and ${P_i}=(x_{r_{i-1}+1},\dots,x_{r_{i}})$ where $r_i=d_1+\dots+d_i$.
\begin{Proposition}\label{3}
\begin{multline*}
$$\sdepth(I)\leq3+d+\\\frac{1}{d_{1}d_{2}d_{3}}
\Big[ \Big(
        \begin{array}{c}
          n-d \\
          4 \\
        \end{array}
      \Big)
-\sum\limits_{i=1}^3\Big(
        \begin{array}{c}
          d_{i} \\
          4 \\
        \end{array}
      \Big)
        - \sum\limits_{i=1}^3\Big(
                     \begin{array}{c}
                       d_{i} \\
                       3 \\
                     \end{array}
                   \Big)
(n-d_{i})
 -\sum\limits_{i<j}\Big(
                \begin{array}{c}
                  d_{i} \\
                  2 \\
                \end{array}
              \Big)\Big(
                     \begin{array}{c}
                       d_{j} \\
                       2 \\
                     \end{array}
                   \Big)
\Big].$$
\end{multline*}
\end{Proposition}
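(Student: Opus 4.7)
The plan is to collapse $P_4$ out of the intersection by a single colon operation, pass to the polynomial subring on the variables of $P_1,P_2,P_3$, and finally invoke Theorem~\ref{2}.

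First I pick $v:=x_{r_3+1}\in G(P_4)$. Since the generator sets $G(P_i)$ are pairwise disjoint, $v$ lies in no other prime, hence $P_j:v=P_j$ for $j=1,2,3$ while $P_4:v=S$. As taking colon commutes with finite intersections, this gives $I:v=P_1\cap P_2\cap P_3$. Noting that $v\notin I$ (for instance $v\notin P_1$), the inequality \cite[Proposition 1.3]{D2} (already used in the proof of Proposition \ref{5}) delivers
$$\sdepth_S(I)\;\leq\;\sdepth_S(I:v)\;=\;\sdepth_S(P_1\cap P_2\cap P_3).$$

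Next, let $S'':=K[x_1,\dots,x_{n-d}]$ be the polynomial subring on the variables occurring in $G(P_1)\cup G(P_2)\cup G(P_3)$, and set $J:=(P_1\cap P_2\cap P_3)\cap S''$, so that $P_1\cap P_2\cap P_3=JS$. Since none of the $d$ variables in $G(P_4)$ appears in a minimal generator of $J$, \cite[Lemma 3.6]{HVZ} yields
$$\sdepth_S(P_1\cap P_2\cap P_3)\;=\;\sdepth_{S''}(J)+d.$$
The ideal $J\subset S''$ satisfies the hypotheses of Theorem~\ref{2}: the three primes have pairwise disjoint generator sets, their sum is the maximal ideal of $S''$, and $d_1+d_2+d_3=\dim S''=n-d$. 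Running the poset-counting argument of Theorem~\ref{2} inside $S''$ (that is, with ambient cardinality $n-d$ in place of $n$) bounds $\sdepth_{S''}(J)$ by the bracketed quantity in the statement, and adding $d$ produces the claimed inequality.

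I do not expect a serious obstacle. The conceptual step is the colon identification $I:v=P_1\cap P_2\cap P_3$, which is forced purely by the disjointness hypothesis; the dimension shift and the re-invocation of Theorem~\ref{2} in $S''$ are routine. The only place calling for care is confirming that the poset-counting behind Theorem~\ref{2} depends on the ambient variable set only through its cardinality, so that every $n$ in that formula may be replaced by $n-d$; this is transparent from the direct counting formulation, and makes the whole reduction a faithful analogue of how Proposition~\ref{12} peels off $d$ irrelevant variables before applying Lemma~\ref{8}.
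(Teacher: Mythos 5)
Your argument is correct and reaches the bound by the same two-step skeleton as the paper: strip $P_4$ from the intersection, then apply Theorem~\ref{2} to $P_1\cap P_2\cap P_3$ inside $K[x_1,\dots,x_{n-d}]$ after a dimension shift via \cite[Lemma 3.6]{HVZ}. The one place you diverge is the stripping step: the paper substitutes $x_n\mapsto 1$, so that $\psi(I)=P_1\cap P_2\cap P_3$ in $K[x_1,\dots,x_{n-1}]$, and invokes \cite[Lemma 2.2]{C1} to pay an additive $+1$, leaving $d-1$ free variables for the shift; you instead colon out by a single generator $v\in G(P_4)$, using \cite[Proposition 1.3]{D2} at no additive cost and leaving all $d$ variables of $G(P_4)$ free. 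Both routes contribute the same net $+d$, so the difference is essentially cosmetic and your colon identification $I:v=P_1\cap P_2\cap P_3$ is valid. One caveat, which applies equally to the paper's own proof: if, as you say, every occurrence of $n$ in the formula of Theorem~\ref{2} is replaced by $n-d$, the third term in the bracket becomes $\sum_{i=1}^3\binom{d_i}{3}\,(n-d-d_i)$, not $\sum_{i=1}^3\binom{d_i}{3}\,(n-d_i)$ as printed in the statement; since the printed version subtracts more, it is the (slightly) stronger claim and is not literally what either derivation yields. This looks like a transcription slip in the proposition rather than a defect of your argument, but as written your final sentence asserts the printed bracket when your computation actually produces the $(n-d-d_i)$ version.
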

\begin{proof}
Define a map $\psi:S\longrightarrow S'=K[x_1,\dots,x_{n-1}]$ by $\psi(x_i)=x_i$ for $i\leq n-1$ and $\psi(x_n)=1$. Then we have $I'=\psi(I)={P_1}\cap {P_2}\cap {P_3}$ and by \cite[Lemma 2.2]{C1} it follows $\sdepth_S(I)\leq \sdepth_{S'}(I')+1$. Using Theorem \ref{2} we get
\begin{multline*}
$$\sdepth(I'\cap K[x_1,\dots,x_{n-d}])\leq \\3+\frac{1}{d_{1}d_{2}d_{3}}
\Big[ \Big(
        \begin{array}{c}
          n-d \\
          4 \\
        \end{array}
      \Big)
-\sum\limits_{i=1}^3\Big(
        \begin{array}{c}
          d_{i} \\
          4 \\
        \end{array}
      \Big)
        - \sum\limits_{i=1}^3\Big(
                     \begin{array}{c}
                       d_{i} \\
                       3 \\
                     \end{array}
                   \Big)
(n-d_{i})
 -\sum\limits_{i<j}\Big(
                \begin{array}{c}
                  d_{i} \\
                  2 \\
                \end{array}
              \Big)\Big(
                     \begin{array}{c}
                       d_{j} \\
                       2 \\
                     \end{array}
                   \Big)
\Big].$$
\end{multline*}
\\ Now by \cite[Lemma 3.6]{HVZ} $$\sdepth_S(I)\leq \sdepth(I'\cap K[x_1,\dots,x_{n-d}])+(d-1)+1,$$ which is enough.
\end{proof}
\begin{Example}\label{312}
{\em Let $I=(x_1^{a_1},\dots,x_5^{a_5})\cap (x_6^{a_6},\dots,x_{10}^{a_{10}})\cap (x_{11}^{a_{11}},\dots,x_{15}^{a_{15}})\cap (x_{16}^{a_{16}},x_{17}^{a_{17}})$ for some positive integers $a_i$. Applying \cite[Corollary 2.2]{MI} and using Proposition \ref{3} we have $\sdepth(I)\leq 11$. By Corollary \ref{1a} $\sdepth(I)\geq 10.$
}
\end{Example}

\end{document}